 \makeatletter \@addtoreset{equation}{section}
\newtheorem{theorem}{Theorem}[section]
\newtheorem{hyp}[theorem]{Hypothesis}{\rm}
\newtheorem{lemma}[theorem]{Lemma}
\newtheorem{corollary}[theorem]{Corollary}
\newtheorem{remark}[theorem]{Remark}
\newcommand{\C}{{\mathbb C}}
\newcommand{\R}{{\mathbb R}}
\newcommand{\N}{{\mathbb N}}
\newcommand{\no}{\nonumber}
\newcommand{\ve}{\varepsilon}
\renewcommand{\l}{\lambda}
\newcommand{\s}{\sigma}
\newcommand{\om}{\omega}
\newcommand{\Om}{\Omega}
\renewcommand{\a}{\alpha}
\renewcommand{\b}{\beta}
\newcommand{\ov}{\overline}
\newcommand{\qq}{\qquad}
\newcommand{\q}{\quad}
\newcommand{\rmd}{{\rm d}}
\title[A strongly ill-posed problem in an unbounded domain]{A strongly ill-posed problem
for a degenerate parabolic equation with unbounded coefficients in an unbounded domain $\Omega\times {\mathcal O}$ of $\R^{M+N}$}
\author{A. Lorenzi}
\address{Dipartimento di Matematica, Universit\`a degli Studi di Milano, Via Saldini 50, I-20133 MILANO (Italy)}
\email{alfredo.lorenzi@unimi.it}
\author{L. Lorenzi}
\address{Dipartimento di Matematica e Informatica, Universit\`a degli Studi di Parma, Parco Area delle Scienze 53/A, I-43124 PARMA (Italy)}
\email{luca.lorenzi@unipr.it}
\begin{document}

\begin{abstract}
In this paper we deal with a strongly ill-posed second-order degenerate parabolic problem
in the unbounded open set $\Om\times {\mathcal O}\subset \R^{M+N}$, related to a linear equation with unbounded coefficients, with no initial condition,
but endowed with the usual Dirichlet condition on $(0,T)\times \partial(\Om\times {\mathcal O})$ and an additional condition involving the $x$-normal derivative on $\Gamma\times {\mathcal O}$, $\Gamma$ being an open subset of $\Om$.

The task of this paper is twofold: determining sufficient conditions on our data implying the uniqueness of the solution $u$ to the boundary value problem as well as determining a pair of metrics with respect of which $u$ depends continuously on the data.

The results obtained for the parabolic problem are then applied to a similar problem for a convolution integrodifferential linear parabolic equation.
\end{abstract}
\maketitle

\section{Introduction}
In the second  half of the last century a lot of interest, due to the rushing on of Technology, was devoted to Inverse Problems, a branch of which consists just of strongly ill-posed problems, where {\it strongly} means that no transformation can be found in order to change such problems to well-posed ones, at least, say, when working in classical or Sobolev function spaces of {\it finite order}.

Assume that you are dealing with the evolution of the temperature $u$ involving a body $\om$ occupying a (possibly) unbounded domain in $\R^{M+N}$, and assume that you cannot measure the temperature $u$ inside $\om$, but you can perform only measurements on the boundary of $\om$. So, you have no initial condition at your disposal, but only several boundary measurements of temperature, flux and so on. This makes the parabolic problem strongly ill-posed. The basic questions which arise in this case are the following:
 \begin{enumerate}[\rm (i)]
\item
may the solution to this problem be unique?
\item
in this case may the solution depend continuously on the boundary data?
\item
if this is possible, which are the allowed metrics?
\end{enumerate}
This paper is devoted to shed some light on degenerate parabolic problems of that kind on (possibly) unbounded
domains $\om=\Om\times {\mathcal O}$, where $\Om\subset \R^M$ and ${\mathcal O}\subset \R^N$ are two smooth open sets, the
first being bounded, while the latter is unbounded.
More precisely, we consider operators ${\mathcal A}$, defined on smooth functions $\zeta:\Omega\times {\mathcal O}\to\R$ by
\begin{align*}
{\mathcal A}\zeta(x,y)=&
{\rm div}_x(a(x)\nabla_x\zeta(x,y))
+\sum_{i=1}^Mc_i(x,y)D_{x_i}\zeta(x,y)\\
&+ \sum_{j=1}^Nb_j(y)D_{y_j}\zeta(x,y)+b_0(x,y)\zeta(x,y),
\end{align*}
for any $(x,y)\in\Omega\times {\mathcal O}$. We assume that the function $a$ nowhere vanishes in $\overline\Omega$. Anyway operator ${\mathcal A}$ is
degenerate since its leading part contains second-order derivatives computed only with respect to the variables $x_1,\ldots,x_N$.

We will be concerned mainly with the questions of uniqueness and continuous dependence on the data (two fundamental topics for people working in Applied Mathematics) of the nonhomogeneous linear parabolic equation associated with the operator  ${\mathcal A}$ in $(0,T)\times\Omega\times{\mathcal O}$, with no initial conditions. The lack of the initial conditions is replaced by the requirement that the ``temperature'' $u$ should assume prescribed values on $(0,T)\times \partial (\Om\times {\mathcal O})$, while the $x$-normal derivative of $u$ should assume prescribed values on an open subsurface $(0,T)\times \Gamma\times {\mathcal O}$ of the lateral boundary $(0,T)\times \partial \Om\times {\mathcal O}$.


The fundamental tool to give some positive answer to our problem are new Carleman estimates that fits our case.
Following the ideas in \cite[Theorem 3.4]{CH2}, we will construct suitable Carleman inequalities related to an unbounded open set.

We then show that our technique can be adapted to deal also with some degenerate integrodifferential parabolic boundary value problems and with
some class of degenerate semilinear boundary value problems.


Carleman estimates, entering many applications in Control theory (see e.g., \cite{FI,T}) and in unique continuation theorems (see e.g., \cite{Kenig}) have shown to be a powerful tool in studying inverse and ill-posed problems for partial differential equations. Starting from the pioneering works in the eighties by Bukhgeim and Klibanov (see \cite{B2,K1,K2} and also the monographs \cite{B1,KT} and
the survey papers \cite{I,K3}), Carleman estimates have been used to solve identification problems, mainly in bounded domains, associated with nondegenerate differential operators.
We quote, e.g., \cite{BMO,B,BGLR,BY,CGR,IY1,PY,W}. On the contrary, to the best of our knowledge, Carleman estimates have not been extensively used so far
 in the analysis of inverse problems in unbounded domains. We are aware only of the papers \cite{CCG,CG}. In \cite{CCG} Carleman estimates have been used to uniquely recover the unknown function $c$ in a Cauchy problem for the Schr\"odinger equation
\begin{eqnarray*}
i\frac{\partial q}{\partial t}+{\rm div}(c\nabla q)=0,
\end{eqnarray*}
related to a strip of $\R^2$, from the knowledge of the normal derivative of the time derivative of $q$ on
on the upper boundary of the strip. More recently, in \cite{CG} the authors have
considered the more general form of the Schr\"odinger equation
\begin{eqnarray*}
i\frac{\partial q}{\partial t}+a\Delta q+bq=0,
\end{eqnarray*}
and they have shown that the
knowledge of the normal derivative of the second-order time derivative of the solution on the same part of the boundary of
the strip as in \cite{CCG}, allows for recovering the two functions $a$ and $b$.
Both in \cite{CCG} and in \cite{CG} a nondegeneracy condition is assumed on the elliptic part of the operator. Moreover, the coefficients
are assumed to be at least bounded.

Similarly, Carleman estimates for degenerate parabolic problems seem to have not been so far widely used to solve inverse problems.
We are aware of the papers \cite{CTY,T1,V,V1}.
In \cite{CTY,T1} Carleman estimates are used to recover the unknown function $g$ entering the degenerate one-dimensional heat equation
\begin{eqnarray*}
\frac{\partial u}{\partial t}-\frac{\partial}{\partial x}\left (x^{\alpha}\frac{\partial u}{\partial x}\right )=g,
\end{eqnarray*}
related to the spatial domain $(0,1)$, and where $\alpha\in [0,2)$.

In \cite{V,V1} such estimates are used to solve an identification problem for a boundary value problem associated with the heat equation
\begin{eqnarray*}
\frac{\partial u}{\partial t}=\Delta u+\frac{\mu}{|x|^2}u+g
\end{eqnarray*}
in a boundary open set containing $0$, with no initial condition and $\mu$ is a positive constant not larger than the optimal constant in Hardy's inequality. The Carleman estimates obtained by the author extends similar estimates obtained in \cite{E,VZ}.

On the other hand, Carleman estimates for degenerate parabolic equations have been more widely used in Control Theory, but mainly associated to one-dimensional parabolic operators (we quote e.g., \cite{ACF,CFR,CMV0,CMV1,CMV,CMV2,FdT} and the reference therein).

At present, we are not aware of other papers where Carleman estimates are proved for \emph{degenerate} parabolic operators with \emph{unbounded} coefficients, which are related to an \emph{unbounded} spatial domain.

The plan of the paper is the following: in Section \ref{sect-2} we exactly state the ill-posed degenerate differential problem,
while in Section \ref{sect-3} we prove two theorems involving Carleman estimates for our problem, implying the uniqueness of our solution. Section \ref{sect-4} is devoted to finding a continuous dependence result for the solution to our problem in the usual space $L^2(\Om\times {\mathcal O})$.
Finally, in Section \ref{sect-5} we extend our results to both to a convolution integrodifferential equation (see Subsection \ref{subsect-5.1}) and
to a class of semilinear equations (see Subsection \ref{subsect-5.2}).

\subsection*{Notations}
Throughout the paper we denote by $\|f\|_{\infty}$ the sup-norm of a given bounded function $f$. If $f\in C^k(\overline\Omega)$ for some $k\in\mathbb N$ and some bounded domain $\Omega\subset\mathbb R^M$, we denote by
$\|f\|_{k,\infty}$ the Euclidean norm of $f$, i.e., $\|f\|_{k,\infty}=\sum_{|\alpha|\le k}\|D^{\alpha}f\|_{\infty}$. The same notation is used to denote the $W^{k,\infty}$-norm of a function $a$.

Given a square matrix $B$, we denote by $\|B\|$ its Euclidean norm.

Typically, the function spaces that we consider consist of real-valued functions but in Sections \ref{sect-2} and \ref{sect-4}, and in the first part of Section \ref{sect-3}, where we need complex-valued functions for our integrodifferential application. In this case we use the subscript ``$\C$'' to denote function spaces consisting of complex-valued functions.

The inner product in $\R^K$ will be denoted by ``$\cdot$''.
The $L^2$-Euclidean norm, and the associated scalar product are denoted, respectively, by $\|\cdot\|_2$ and $(\cdot,\cdot)_2$.

\section{Statement of the ill-posed problem concerning a degenerate parabolic operator in $\Omega\times{\mathcal O}$}
\label{sect-2}
\setcounter{equation}{0}
Let $\Omega\subset \R^M$ and ${\mathcal O}\subset \R^N$ be two open sets of classes $C^3$ and $C^2$, respectively, the first being bounded, the latter unbounded. In particular, also ${\mathcal O}=\R^N$ is allowed.

For any fixed $T>0$, we consider the following problem: {\it look for a function $u\in H^1((0,T);L^2_{\C}(\Om\times {\mathcal O}))\cap L^2((0,T);{\mathcal H}^2_{\C}(\Om\times {\mathcal O}))$ satisfying the following boundary value problem:}
\begin{equation}
\left\{
\begin{array}{l}
D_tu(t,x,y)={\rm div}_x(a(x)\nabla_xu(t,x,y))
+c(x,y)\cdot\nabla_xu(t,x,y)+b_0(x,y)u(t,x,y)\\[1mm]
\qquad\qquad\qquad\, + b(y)\cdot\nabla_yu(t,x,y) + g(t,x,y),\\[1mm]
\qquad\qquad\qquad\qquad\qquad\qquad\qquad\qquad\qquad\;\;\; (t,x,y)\in [0,T]\times\Omega\times{\mathcal O}=:Q_T,
\\[1mm]
u(t,x,y)=h(t,x,y),\qquad\qquad\qquad\qquad\quad\;\;\, (t,x,y)\in [0,T]\times\partial_*(\Omega\times{\mathcal O}),
\\[1mm]
D_{\nu}u(t,x,y)=D_{\nu}h(t,x,y),\qquad\qquad\qquad\quad\;\! (t,x,y)\in [0,T]\times\Gamma\times{\mathcal O}.
\end{array}
\right.
\label{5.1}
\end{equation}
Here, $\partial_*(\Omega\times{\mathcal O})=(\partial\Omega\times{\mathcal O})\cup (\Omega\times\partial{\mathcal O})$,
$\Gamma$ is an open subset of $\partial\Omega$,  $\nu=\nu(x)$ denotes the outward normal unit-vector at $x\in \Gamma$ and
\begin{eqnarray*}
{\mathcal H}^2_{\C}(\Om\times {\mathcal O})=\{z\in H^2_{\C}(\Om\times {\mathcal O}): |b||\nabla_yz|\in L^2(\Om\times {\mathcal O})\}.
\end{eqnarray*}

The hypotheses on the coefficients $a$, $b_0$, $b$, $c$ and the data $g$ and $h$ are listed here below.
\begin{hyp}
\label{hyp-1}
The following conditions are satisfied.
\begin{enumerate}[\rm (i)]
\item
$a\in W^{2,\infty}(\Om)$ and there exists a positive constant $a_0$ such that $|a(x)|\ge a_0$ for any $x\in\Omega$;
\item
$b_0\in L^{\infty}_{\C}(\Om\times{\mathcal O})$;
\item
$b=(b_1,\ldots,b_N)\in (W^{1,\infty}_{\rm loc}({\mathcal O}))^N$ and ${\rm div}\,b\in L^{\infty}({\mathcal O})$;
\item
$c=(c_1,\ldots,c_M)\in (L^{\infty}_{\C}(\Om\times{\mathcal O}))^M$;
\item
$g\in L^2_{\C}(Q_T)$;
\item
$h\in H^1((0,T);L^2_{\C}(\Om\times {\mathcal O}))\cap L^2((0,T);{\mathcal H}^2_{\C}(\Om\times {\mathcal O}))$.
\end{enumerate}
\end{hyp}

Performing the translation $v=u-h$, we can change our problem to one with vanishing boundary value data: {\it look for a function
$v\in H^1((0,T);L^2_{\C}(\Om\times {\mathcal O}))\cap L^2((0,T);{\mathcal H}^2_{\C}(\Om\times {\mathcal O}))$
satisfying the boundary value problem}
\begin{equation}
\label{5.4}
\left\{
\begin{array}{l}
D_tv(t,x,y)={\rm div}_x(a(x)\nabla_xv(t,x,y))+ c(x,y)\cdot\nabla_xv(t,x,y)
\\[1mm]
\qquad\qquad\quad\quad\, +b(y)\cdot\nabla_yv(t,x,y) +b_0(x,y)v(t,x,y)+ {\widetilde g}(t,x,y),\\[1mm]
\qquad\qquad\qquad\qquad\qquad\qquad\qquad\qquad\qquad \qquad\; (t,x,y)\in Q_T,\\[1mm]
v(t,x,y)=0, \qquad\qquad\qquad\qquad\qquad\qquad\qquad\;\, (t,x,y)\in [0,T]\times\partial_*(\Omega\times{\mathcal O}),\\[1mm]
D_{\nu}v(t,x,y)=0,\qquad\qquad\qquad\qquad\qquad\qquad\;\;\;\,\, (t,x,y)\in [0,T]\times\Gamma\times{\mathcal O},
\end{array}
\right.
\end{equation}
where
\begin{align}
{\widetilde g}(t,x,y)=& g(t,x,y)-D_th(t,x,y)+{\rm div}_x(a(x)\nabla_xh(t,x,y))+ c(x,y)\cdot\nabla_xh(t,x,y)
\no\\[1mm]
&+b(y)\cdot\nabla_yh(t,x,y)+b_0(x,y)h(t,x,y),
\label{5.5}
\end{align}
for any $(t,x,y)\in (0,T)\times\Omega\times {\mathcal O}$.

\begin{remark}
\label{rem-1}
{\rm If function $a$ is {\it strictly positive}, then the differential equation in (\ref{5.4}) is {\it forward degenerate parabolic}, while if function $a$ is {\it strictly negative}, then the differential equation in (\ref{5.4}) is {\it backward parabolic}. However, in the present case this is not a trouble at all. Indeed, introducing the new unknown
\begin{eqnarray*}
{\widetilde v}(t,x,y)=v(-t,x,y),\qquad\;\, (t,x,y)\in [0,T]\times\Omega\times{\mathcal O},
\end{eqnarray*}
it is immediate to check that ${\widetilde v}$ solves the problem (\ref{5.4}) with $a(x)$, $b(y)$, $c(x,y)$, $b_0(x,y)$, ${\widetilde g}(t,x,y)$ being replaced, respectively, by $-a(x)$, $-b(y)$, $-c(x,y)$, $-b_0(x,y)$, $-{\widetilde g}(-t,x,y)$, i.e., $\widetilde v$ solves a problem with a differential {\it forward degenerate parabolic} equation.}
\end{remark}

\section{Carleman estimates for the ill-posed problem (\ref{5.1})}
\label{sect-3}
\setcounter{equation}{0}
In view of Remark \ref{rem-1}, in this section we assume that function $a$ is {\it strictly positive}, i.e.,
$a(x)\ge a_0>0$ for any $x\in {\ov \Om}$.

In order to obtain a Carleman estimate related to the domain $\Om\times {\mathcal O}$ we need a weight function, defined on $\ov{\Om}$, with special properties. The existence of such a function is proved by extending \cite[Lemma 1.1]{FI}  to the $C^3$-case and then \cite[Lemma 2.3]{IY}. This can be done without a great efforts. For this reasons, the details are left to the reader.
\begin{lemma}
\label{lemma-1}
There exists a function $\psi\in C^3(\ov{\Om})$ with the following properties:
\begin{enumerate}[\rm (i)]
\item
$\psi$ is positive in $\Omega$;
\item
$|\nabla_x \psi (x)|>0$ for any $x\in {\ov {\Om}}$;
\item
$D_{\nu}\psi(x)\le 0$ for any $x\in \partial \Om\setminus \Gamma$.
\end{enumerate}
\end{lemma}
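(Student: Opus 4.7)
The plan is to reduce Lemma \ref{lemma-1} to the classical Fursikov--Imanuvilov weight function of \cite[Lemma~1.1]{FI}, applied to a suitable enlargement of $\Om$ through $\Gamma$; this is the standard device used in \cite[Lemma~2.3]{IY} to handle a partial observation boundary.

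First, I would construct a bounded open set $\widetilde\Om\subset \R^M$ of class $C^3$ such that
\begin{equation*}
\Om\cup\Gamma\subset \widetilde\Om\qquad \text{and}\qquad \partial\Om\setminus\Gamma\subset \partial\widetilde\Om.
\end{equation*}
This is possible because $\Gamma$ is relatively open in $\partial\Om$: it suffices to glue to $\ov{\Om}$ a thin outward collar of the form $\{x+t\nu(x):x\in \Gamma',\ 0\le t<\delta(x)\}$, where $\Gamma'$ is an open subset of $\partial\Om$ with $\ov{\Gamma'}\subset \Gamma$ and $\delta$ is a suitable nonnegative $C^3$ function vanishing on $\partial\Gamma'$, and then to smooth the resulting corners so that $\partial\widetilde\Om$ is of class $C^3$. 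I would then fix any nonempty open set $\om_0$ whose closure is contained in $\widetilde\Om\setminus \ov{\Om}$.

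Second, I would apply the $C^3$-version of \cite[Lemma~1.1]{FI} to the pair $(\widetilde\Om,\om_0)$, obtaining a function $\widetilde\psi\in C^3(\ov{\widetilde\Om})$ with $\widetilde\psi>0$ in $\widetilde\Om$, $\widetilde\psi\equiv 0$ on $\partial\widetilde\Om$, and $|\nabla_x\widetilde\psi(x)|>0$ for every $x\in \ov{\widetilde\Om}\setminus \om_0$. Setting $\psi:=\widetilde\psi|_{\ov{\Om}}$, property (i) is immediate since $\Om\subset \widetilde\Om$; property (ii) follows from $\ov{\Om}\cap \om_0=\emptyset$; and property (iii) holds because $\psi$ vanishes on $\partial\Om\setminus\Gamma\subset \partial\widetilde\Om$ while remaining strictly positive on $\Om$, so $\psi$ attains a local minimum at every point of $\partial\Om\setminus\Gamma$, whence the outward normal derivative $D_\nu\psi$ is automatically nonpositive there.

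The only substantive point is the passage from the $C^2$-statement of \cite[Lemma~1.1]{FI} to the $C^3$-statement needed here. The FI construction combines a smooth approximation of the distance function to $\partial\widetilde\Om$ with a perturbation by a Morse function whose critical points are confined to $\om_0$; each of these ingredients can be realised in $C^3(\ov{\widetilde\Om})$ once $\partial\widetilde\Om$ is of class $C^3$, by using $C^3$-mollifications and a $C^3$-generic Morse perturbation. This verification is standard but tedious, which justifies the author's choice to leave the details to the reader.
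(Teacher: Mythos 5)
Your construction is exactly the route the paper has in mind: it only cites the Fursikov--Imanuvilov weight (\cite[Lemma 1.1]{FI}) together with its adaptation to a partial observation boundary as in \cite[Lemma 2.3]{IY}, which is precisely your enlargement of $\Om$ through $\Gamma$ with the critical set $\om_0$ placed in the added collar, followed by restriction to $\ov\Om$ and the remark that $\psi\ge 0$ with $\psi=0$ on $\partial\Om\setminus\Gamma$ forces $D_\nu\psi\le 0$ there. The proposal is correct and essentially coincides with the paper's (sketched) argument, including the observation that the $C^3$ upgrade of the FI lemma is routine once $\partial\widetilde\Om$ is of class $C^3$.
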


\medskip

For any $\rho\ge 1$ we set
\begin{equation}
\varphi_\rho(t,x)= \frac{{\rm e}^{\rho \psi (x)}-{\rm e}^{2\rho \|\psi\|_{\infty}}}{t(T-t)},\qquad\;\, (t,x)\in (0,T)\times\overline\Omega,
\label{form-phi}
\end{equation}
and, for simplicity, in the rest of this section we set $\ell(t)=t(T-t)$.

In the following lemma we list some crucial estimate of the function $\varphi_{\rho}$ that we need in the proof
of the Carleman estimates.

\begin{lemma}
\label{stima-varphi-rho}
For any $x\in\overline\Omega$,
$$
\lim_{t\to 0^+}\varphi_{\rho}(t,x)=\lim_{t\to T^-}\varphi_{\rho}(t,x)=+\infty.
$$
Further, let $\alpha$ denote the positive infimum of the function $|\nabla_x\psi|$. Then, the following pointwise inequalities hold true:
\begin{align}
&\ell^{-1}\le \frac{1}{\alpha\rho}|\nabla_x\varphi_{\rho}|,
\label{lemma-stima-1}
\\[2mm]
&|D_t\varphi_\rho|\le \frac{T}{\alpha^2}{\rm e}^{2\rho \|\psi\|_{\infty}}|\nabla_x\varphi_\rho|^2
\le \frac{T^3}{4\alpha^3}{\rm e}^{2\rho \|\psi\|_{\infty}}|\nabla_x\varphi_\rho|^3,
\label{lemma-stima-2}
\\[2mm]
&|D_t^2\varphi_\rho|\le \frac{T^2}{2\alpha^3}{\rm e}^{2\rho \|\psi\|_{\infty}}|\nabla_x\varphi_\rho|^3,
\label{lemma-stima-2bis}
\\[2mm]
&|D_t\nabla_x\varphi_{\rho}|\le \frac{T^3}{4\a^2}|\nabla_x\varphi_{\rho}|^3,
\label{lemma-stima-3}
\\[2mm]
&|D_{x_ix_j}\varphi_\rho| \le \frac{T^2}{4\alpha^2}\left (\|\psi\|_{2,\infty}+\alpha
\|\psi\|_{1,\infty}\right )|\nabla_x\varphi_{\rho}|^2\no\\
&\phantom{|D_{x_ix_j}\varphi_\rho|}
\le \frac{T^4}{16\alpha^3}\left (\|\psi\|_{2,\infty}+\alpha
\|\psi\|_{1,\infty}\right )|\nabla_x\varphi_{\rho}|^3,
\label{lemma-stima-5}
\\[2mm]
&|D_{x_ix_jx_k}\varphi_\rho| \le
\frac{T^2}{4\alpha^2}\left (\|\psi\|_{3,\infty}+3\alpha\|\psi\|_{2,\infty}
+\alpha^2\rho\|\psi\|_{1,\infty}\right )|\nabla_x\varphi_{\rho}|^2\no\\
&\phantom{|D_{x_ix_jx_k}\varphi_\rho|}\le
\frac{T^4}{16\alpha^3}\left (\|\psi\|_{3,\infty}+3\alpha\|\psi\|_{2,\infty}+\alpha^2\|\psi\|_{1,\infty}\right )|\nabla_x\varphi_{\rho}|^3,
\label{lemma-stima-5bis}
\end{align}
for any $i,j,k=1,\ldots,M$.
\end{lemma}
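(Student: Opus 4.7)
The proof is a direct computation in which the decisive ingredients are only three: (a) the trivial bound $\ell(t)\le T^2/4$ on $[0,T]$, hence $1/\ell(t)\ge 4/T^2$; (b) the inequality $|\nabla_x\psi|\ge \alpha>0$ on $\overline\Omega$ granted by Lemma~\ref{lemma-1}(ii); and (c) the fact that $\psi\ge 0$ on $\overline\Omega$ (by positivity in $\Omega$ and continuity), so that ${\rm e}^{\rho\psi(x)}\ge 1$ for every $\rho\ge 1$. The limit assertion is immediate, since $x\mapsto {\rm e}^{\rho\psi(x)}-{\rm e}^{2\rho\|\psi\|_\infty}$ is continuous and strictly negative on $\overline\Omega$, while $\ell(t)\to 0^+$ at the endpoints. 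Differentiating in $x$ gives $\nabla_x\varphi_\rho=\rho\,{\rm e}^{\rho\psi}\nabla_x\psi/\ell$, whence $|\nabla_x\varphi_\rho|\ge \alpha\rho/\ell$, which is precisely (\ref{lemma-stima-1}). The basic identity I will use again and again throughout is
\begin{equation*}
\frac{\rho\,{\rm e}^{\rho\psi(x)}}{\ell(t)}=\frac{|\nabla_x\varphi_\rho(t,x)|}{|\nabla_x\psi(x)|},
\end{equation*}
together with its square, allowing me to trade explicit factors of $\rho\,{\rm e}^{\rho\psi}/\ell$ against powers of $|\nabla_x\varphi_\rho|$.

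For the time-derivative bounds (\ref{lemma-stima-2})--(\ref{lemma-stima-3}), I compute in closed form
\begin{equation*}
D_t\varphi_\rho=-\frac{\ell'\,[{\rm e}^{\rho\psi}-{\rm e}^{2\rho\|\psi\|_\infty}]}{\ell^2},\qquad D_t^2\varphi_\rho=[{\rm e}^{\rho\psi}-{\rm e}^{2\rho\|\psi\|_\infty}]\left(-\frac{\ell''}{\ell^2}+\frac{2(\ell')^2}{\ell^3}\right),
\end{equation*}
and $D_t\nabla_x\varphi_\rho=-\rho\,{\rm e}^{\rho\psi}\ell'\,\nabla_x\psi/\ell^2$. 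Using $|\ell'|\le T$, $|\ell''|=2$, and the majorization $|{\rm e}^{\rho\psi}-{\rm e}^{2\rho\|\psi\|_\infty}|\le {\rm e}^{2\rho\|\psi\|_\infty}$, each quantity is controlled by a power of $1/\ell$ times ${\rm e}^{2\rho\|\psi\|_\infty}$. The substitution $1/\ell\le |\nabla_x\varphi_\rho|/(\alpha\rho)\le |\nabla_x\varphi_\rho|/\alpha$ from (\ref{lemma-stima-1}) then delivers the first (squared) inequality in each line; the passage to the cubic inequality in (\ref{lemma-stima-2}) is obtained by multiplying by $1\le (T^2/(4\alpha))|\nabla_x\varphi_\rho|$, itself an immediate consequence of $\ell\le T^2/4$ and (\ref{lemma-stima-1}).

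For the spatial bounds (\ref{lemma-stima-5}) and (\ref{lemma-stima-5bis}), I expand
\begin{equation*}
D_{x_ix_j}\varphi_\rho=\frac{\rho\,{\rm e}^{\rho\psi}(\rho\,\partial_i\psi\,\partial_j\psi+\partial_{ij}\psi)}{\ell},
\end{equation*}
and apply the Leibniz rule once more for $D_{x_ix_jx_k}\varphi_\rho$. Every summand has the shape $T_m=\rho^m\,{\rm e}^{\rho\psi}P_m/\ell$, where $P_m$ is a product of derivatives of $\psi$ of total order equal to the order of differentiation and $m$ equals the number of first-order factors in $P_m$. I rewrite
\begin{equation*}
T_m=|\nabla_x\varphi_\rho|^2\cdot\frac{\rho^{m-2}\,\ell\,P_m}{{\rm e}^{\rho\psi}|\nabla_x\psi|^2},
\end{equation*}
then ``spend'' at most two of the first-order factors of $P_m$ against the $|\nabla_x\psi|^2$ in the denominator via $|\partial_k\psi|\le |\nabla_x\psi|$, bound the remaining product by the appropriate $W^{k,\infty}$-norm of $\psi$, use ${\rm e}^{\rho\psi}\ge 1$ together with $\ell\le T^2/4$, and obtain the required $(T^2/(4\alpha^2))|\nabla_x\varphi_\rho|^2$-type bound. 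The cubic versions in (\ref{lemma-stima-5}) and (\ref{lemma-stima-5bis}) follow again from $1\le (T^2/(4\alpha))|\nabla_x\varphi_\rho|$.

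The only real subtlety is bookkeeping: it is exactly the trick of cancelling two factors $|\nabla_x\psi|$ from $|\nabla_x\varphi_\rho|^2$ against two first-order derivatives of $\psi$ inside $P_m$ (instead of estimating those derivatives separately by $\|\psi\|_{1,\infty}$) that yields the coefficients $\alpha\|\psi\|_{1,\infty}$ in (\ref{lemma-stima-5}) and $\alpha^2\rho\|\psi\|_{1,\infty}$ in (\ref{lemma-stima-5bis}), as opposed to the cruder $\|\psi\|_{1,\infty}^2$ and $\|\psi\|_{1,\infty}^3$ that a naive estimate would produce. No step is conceptually deep; the whole lemma is an elementary consequence of the calculus identity for $\nabla_x\varphi_\rho$ together with (a)--(c) above.
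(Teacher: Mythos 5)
Your argument is essentially the paper's own proof: differentiate $\varphi_\rho$ explicitly, use the identity $\rho\,{\rm e}^{\rho\psi}/\ell=|\nabla_x\varphi_\rho|/|\nabla_x\psi|$ together with $|\nabla_x\psi|\ge\alpha$, ${\rm e}^{\rho\psi}\ge 1$, $\rho\ge 1$ and $\ell\le T^2/4$, and trade factors of $\rho\,{\rm e}^{\rho\psi}/\ell$ and first-order derivatives of $\psi$ for powers of $|\nabla_x\varphi_\rho|$ — the only differences from the paper are bookkeeping (the paper absorbs one first-order factor into $|\nabla_x\varphi_\rho|$ and keeps $\|\psi\|_{1,\infty}$, where you absorb two and get $\alpha^2\le\alpha\|\psi\|_{1,\infty}$; in passing to the cubic form of \eqref{lemma-stima-5bis} you should use the sharper $1\le T^2|\nabla_x\varphi_\rho|/(4\alpha\rho)$ so that the $\rho$ in $\alpha^2\rho\|\psi\|_{1,\infty}$ cancels; and your exact formula for $D_t^2\varphi_\rho$ yields the constant $2T^2\alpha^{-3}\rho^{-3}$, which matches the printed $T^2/(2\alpha^3)$ in \eqref{lemma-stima-2bis} only for $\rho^3\ge 4$ — a slip in the paper's stated constant that is harmless downstream, since these constants are absorbed into the unspecified $C_j$). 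One point you should state explicitly rather than gloss over: your (correct) observation that ${\rm e}^{\rho\psi}-{\rm e}^{2\rho\|\psi\|_\infty}$ is strictly negative shows that $\varphi_\rho\to-\infty$ as $t\to 0^+$ and $t\to T^-$, not $+\infty$ as misprinted in the statement; $-\infty$ is what is actually needed later, so that $w_\rho=v\,{\rm e}^{\lambda\varphi_\rho}$ and its derivatives vanish at $t=0,T$, so your reasoning proves the corrected claim but contradicts the one you assert as "immediate".
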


\begin{proof}
The proof of (\ref{lemma-stima-1}) is straightforward. We limit ourselves to proving (\ref{lemma-stima-2}), (\ref{lemma-stima-5}) and
(\ref{lemma-stima-5bis}), the other estimates being completely similar to prove.

Since
\begin{eqnarray*}
D_t\varphi_{\rho}(t,\cdot)=(t-2T)\frac{{\rm e}^{2\rho\|\psi\|_{\infty}}-{\rm e}^{\rho\psi}}{(\ell(t))^2},\qquad\;\,t\in (0,T),
\end{eqnarray*}
we can estimate, using (\ref{lemma-stima-1}),
\begin{eqnarray*}
|D_t\varphi_{\rho}(t,\cdot)|\le \frac{T}{(\ell(t))^2}{\rm e}^{2\rho\|\psi\|_{\infty}}
\le \frac{T}{\alpha^2\rho^2}{\rm e}^{2\rho\|\psi\|_{\infty}}|\nabla_x\varphi_{\rho}|^2,
\end{eqnarray*}
for any $t\in (0,T)$,
which gives the first inequality in (\ref{lemma-stima-2}) since $\rho\ge 1$.

To prove the second inequality in (\ref{lemma-stima-2}) it suffices to use again
(\ref{lemma-stima-1}) and the estimate $\|\ell\|_{\infty}\le T^2/4$ to obtain
\begin{eqnarray*}
\frac{T}{\alpha^2}{\rm e}^{2\rho\|\psi\|_{\infty}}|\nabla_x\varphi_{\rho}|^2
=\frac{T}{\alpha^2}{\rm e}^{2\rho\|\psi\|_{\infty}}\frac{\ell(t)}{\alpha\rho} \frac{\alpha\rho}{\ell(t)}|\nabla_x\varphi_{\rho}|^2
\le \frac{T^3}{4\alpha^3}{\rm e}^{2\rho\|\psi\|_{\infty}}|\nabla_x\varphi_{\rho}|^3.
\end{eqnarray*}

Let us finally prove the first inequalities in (\ref{lemma-stima-5}) and (\ref{lemma-stima-5bis}),
the other two inequalities in (\ref{lemma-stima-5}) and (\ref{lemma-stima-5bis})
then follow from these ones and (\ref{lemma-stima-1}).
For this purpose we observe that
\begin{eqnarray*}
D_{x_ix_j}\varphi_{\rho}=\frac{\rho}{\ell}(D_{x_ix_j}\psi+\rho D_{x_i}\psi D_{x_j}\psi){\rm e}^{\rho\psi}.
\end{eqnarray*}
Hence,
\begin{align*}
|D_{x_ix_j}\varphi_{\rho}|&\le\frac{\rho}{\ell}\|\psi\|_{2,\infty}{\rm e}^{\rho\psi}+\rho\|\psi\|_{1,\infty}|\nabla_x\varphi_{\rho}|\\
&=\frac{\rho}{\ell}\frac{\ell^2}{\alpha^2\rho^2}\frac{\alpha^2\rho^2}{\ell^2}{\rm e}^{\rho\psi}\|\psi\|_{2,\infty}+
\rho \frac{\ell}{\alpha\rho}\frac{\alpha\rho}{\ell}\|\psi\|_{1,\infty}|\nabla_x\varphi_{\rho}|\\
&\le\frac{\ell}{\alpha^2\rho}\left (\frac{\rho^2}{\ell^2}|\nabla_x\psi|^2{\rm e}^{2\rho\psi}\right )\|\psi\|_{2,\infty}+
\frac{\ell}{\alpha}\frac{\alpha\rho}{\ell}\|\psi\|_{1,\infty}|\nabla_x\varphi_{\rho}|\\
&\le\frac{T^2}{4\alpha^2}\|\psi\|_{2,\infty}|\nabla_x\varphi_{\rho}|^2+\frac{T^2}{4\alpha}
\|\psi\|_{1,\infty}|\nabla_x\varphi_{\rho}|^2.
\end{align*}
The first estimate in (\ref{lemma-stima-5}) follows at once.

Similarly, one has
\begin{align*}
D_{x_ix_jx_k}\varphi_{\rho}=\frac{\rho}{\ell}[&D_{x_ix_jx_k}\psi+\rho (D_{x_ix_j}\psi D_{x_k}\psi+D_{x_jx_k}\psi D_{x_i}\psi+D_{x_ix_k}\psi D_{x_j}\psi)\\
&\;\,+\rho^2D_{x_i}\psi D_{x_j}\psi D_{x_k}\psi ]{\rm e}^{\rho\psi}.
\end{align*}
Hence, arguing as above, one gets
\begin{align*}
|D_{x_ix_jx_k}\varphi_{\rho}|\le&\frac{\rho}{\ell}\|\psi\|_{3,\infty}{\rm e}^{\rho\psi}+3\rho\|\psi\|_{2,\infty}|\nabla_x\varphi_{\rho}|
+\rho^2\|\psi\|_{1,\infty}|\nabla_x\psi||\nabla_x\varphi_{\rho}|\\
\le&\frac{\ell}{\alpha^2\rho}\left (\frac{\rho^2}{\ell^2}|\nabla_x\psi|^2{\rm e}^{2\rho\psi}\right )\|\psi\|_{3,\infty}+3\frac{\ell}{\alpha}\frac{\alpha\rho}{\ell}\|\psi\|_{2,\infty}|\nabla_x\varphi_{\rho}|\\
&+\|\psi\|_{1,\infty}\left (\frac{\rho}{\ell}|\nabla_x\psi|{\rm e}^{\rho\psi}\right )\rho\ell|\nabla_x\varphi_{\rho}|
\\
\le&\frac{T^2}{4\alpha^2}\|\psi\|_{3,\infty}|\nabla_x\varphi_{\rho}|^2+\frac{3T^2}{4\alpha}\|\psi\|_{2,\infty}|\nabla_x\varphi_{\rho}|^2
+\frac{T^2}{4}\rho\|\psi\|_{1,\infty}|\nabla_x\varphi_{\rho}|^2.
\end{align*}
\end{proof}

The main result of this section is the following theorem.

\begin{theorem}[Carleman estimates]
\label{thm-main1}
There exist two positive constants $\widetilde \lambda_0=\widetilde \lambda_0(a_0,
\|a\|_{{2,\infty}},\|b_0\|_{\infty},\|{\rm div}\,b\|_{\infty},\|c\|_{\infty},\|\psi\|_{3,\infty},\alpha,T)$, $\rho_0=\rho_0(a_0,\|a\|_{{1,\infty}},\|\psi\|_{2,\infty},\alpha)$ for which the following estimate holds for all $\l\ge \widetilde \l_0$ and all $v\in H^1((0,T);L^2_{\C}(\Omega\times{\mathcal O}))\cap L^2((0,T);{\mathcal H}^2_{\C}(\Om\times \R^N))$:
\begin{equation}
\int_{Q_T} \big (\l|\nabla_x\varphi_{\rho_0}||\nabla_xv|^2 + \l^3|\nabla_x\varphi_{\rho_0}|^3v^2\big)
{\rm e}^{2\l \varphi_{\rho_0}}\,\rmd t\rmd x\rmd y \le 64\int_{Q_T} |{\mathcal P}v|^2{\rm e}^{2\l \varphi_{\rho_0}}\,\rmd t\rmd x\rmd y.
\label{6.72}
\end{equation}
Here, ${\mathcal P}v=D_tv-{\rm div}_x(a\nabla_xv)-b\cdot\nabla_yv-c\cdot\nabla_xv-b_0v$.
\end{theorem}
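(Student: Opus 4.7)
The plan is to apply the classical Carleman strategy: conjugate the principal part of $\mathcal P$ by the weight $e^{\l\varphi_\rho}$, split the conjugated operator into its formally self-adjoint and skew-adjoint pieces in $L^2(Q_T)$, and extract positivity from their mixed scalar product. Setting $v=e^{-\l\varphi_\rho}w$, a direct calculation (crucially using that $\varphi_\rho$ is independent of $y$, so that conjugation commutes with the transport term $b\cdot\nabla_y$) gives
\begin{align*}
P_\l w:=e^{\l\varphi_\rho}\big(D_tv-{\rm div}_x(a\nabla_xv)-b\cdot\nabla_yv\big)=\;&D_tw-\l(D_t\varphi_\rho)w-{\rm div}_x(a\nabla_xw)\\
&+2\l a\nabla_x\varphi_\rho\cdot\nabla_xw+\l\,{\rm div}_x(a\nabla_x\varphi_\rho)w\\
&-\l^2 a|\nabla_x\varphi_\rho|^2w-b\cdot\nabla_yw.
\end{align*}
I decompose $P_\l w=S_\l w+A_\l w+R_\l w$, where $S_\l w=-{\rm div}_x(a\nabla_xw)-\l^2a|\nabla_x\varphi_\rho|^2w$ is formally self-adjoint, $A_\l w=D_tw+2\l a\nabla_x\varphi_\rho\cdot\nabla_xw-b\cdot\nabla_yw$ is formally skew-adjoint up to a $\frac{1}{2}{\rm div}_yb$ correction, and $R_\l w$ collects the zeroth-order multiplications by $-\l D_t\varphi_\rho$, $\l\,{\rm div}_x(a\nabla_x\varphi_\rho)$ and $\frac{1}{2}{\rm div}_yb$. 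The lower-order terms $c\cdot\nabla_xv$ and $b_0v$ of $\mathcal P$ are moved to the right-hand side and reabsorbed at the end.

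From $\|P_\l w-R_\l w\|_2^2=\|S_\l w\|_2^2+\|A_\l w\|_2^2+2(S_\l w,A_\l w)_2$ I read off $2(S_\l w,A_\l w)_2\le 2\|P_\l w\|_2^2+2\|R_\l w\|_2^2$, so the whole game is to bound $(S_\l w,A_\l w)_2$ from below by the target norm. Expanding this scalar product and integrating by parts in $t$, $x$ and $y$, the boundary integrals are disposed of as follows: at $t=0$ and $t=T$ they vanish by the exponential decay of $e^{\l\varphi_\rho(t,\cdot)}$ coming from Lemma \ref{stima-varphi-rho}; on $[0,T]\times\Omega\times\partial{\mathcal O}$ they vanish because $v=0$ there forces $w=0$ and $\nabla_yw=0$; on $[0,T]\times\Gamma\times{\mathcal O}$ they vanish because $v=D_\nu v=0$ yield $D_\nu w=0$; and on $[0,T]\times(\partial\Omega\setminus\Gamma)\times{\mathcal O}$ the only surviving boundary integral reduces, up to absorbable remainders, to a nonnegative multiple of $\int a(-D_\nu\varphi_\rho)(D_\nu w)^2$, whose sign is correct thanks to Lemma \ref{lemma-1}(iii).

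Using the pointwise bounds on $|D_t\varphi_\rho|$, $|D_{x_ix_j}\varphi_\rho|$ and $|D_{x_ix_jx_k}\varphi_\rho|$ by powers of $|\nabla_x\varphi_\rho|$ from Lemma \ref{stima-varphi-rho}, the remaining interior terms reorganize into
\begin{equation*}
2(S_\l w,A_\l w)_2\ge C\int_{Q_T}\big(\l|\nabla_x\varphi_\rho||\nabla_xw|^2+\l^3|\nabla_x\varphi_\rho|^3w^2\big)\rmd t\rmd x\rmd y
\end{equation*}
with $C>0$, once $\rho$ is fixed equal to a $\rho_0$ large enough that the favorable quadratic form proportional to $a|\nabla_x\psi|^2$ swallows the error coming from the Hessian $D_{x_ix_j}\psi$, and then $\l\ge\widetilde\l_0$ is chosen large enough to absorb $\|R_\l w\|_2^2$ and the perturbations generated by $c$, $b_0$ and ${\rm div}_yb$. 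The passage back from $w$ to $v$ is direct, via $|\nabla_xw|^2=e^{2\l\varphi_\rho}|\nabla_xv+\l(\nabla_x\varphi_\rho)v|^2$; the successive factors of $2$ in the absorption steps produce the explicit constant $64$ in (\ref{6.72}).

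The main obstacle is the honest bookkeeping of every cross term in the expansion of $(S_\l w,A_\l w)_2$ in the presence of the unbounded $y$-domain and the (possibly) unbounded coefficient $b$. This is kept under control by the $y$-independence of $\varphi_\rho$: any cross term arising from the $b\cdot\nabla_yw$ part of $A_\l w$, once integrated by parts in $y$, reduces to a $({\rm div}_yb)$-multiple of a quantity already present in the target norm, so $|b|$ itself never appears unintegrated. The integrations by parts are legitimate thanks to the integrability hypothesis $|b||\nabla_yv|\in L^2$ that is built into the very definition of ${\mathcal H}^2_{\C}(\Omega\times{\mathcal O})$.
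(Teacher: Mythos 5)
Your overall architecture (conjugate by ${\rm e}^{\l\varphi_\rho}$, split, use the mixed product, exploit the $y$-independence of $\varphi_\rho$ so only ${\rm div}\,b$ appears, sign of the boundary term via Lemma \ref{lemma-1}(iii), absorb $c\cdot\nabla_x v$ and $b_0v$ at the end) matches the paper, but the central inequality you assert, namely $2(S_\l w,A_\l w)_2\ge C\int_{Q_T}(\l|\nabla_x\varphi_\rho||\nabla_xw|^2+\l^3|\nabla_x\varphi_\rho|^3w^2)$, is not justified and, for your particular splitting, is false at the level of the leading terms. The only gradient contribution to your cross product comes from $(-{\rm div}_x(a\nabla_xw),\,2\l a\nabla_x\varphi_\rho\cdot\nabla_xw)_2$, which after integration by parts equals, up to boundary terms and $O(\l)$ errors with $\rho$-independent constants,
\begin{equation*}
-2\l\int_{Q_T}{\rm div}_x(a^2\nabla_x\varphi_\rho)|\nabla_xw|^2\,\rmd t\rmd x\rmd y
+4\l\int_{Q_T}a^2\sum_{j,k}D_{x_jx_k}\varphi_\rho\,D_{x_j}w\,D_{x_k}w\,\rmd t\rmd x\rmd y ,
\end{equation*}
whose principal part in $\rho$ is $2\l\rho^2\ell^{-1}{\rm e}^{\rho\psi}a^2\big[2(\nabla_x\psi\cdot\nabla_xw)^2-|\nabla_x\psi|^2|\nabla_xw|^2\big]$. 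This quadratic form is indefinite: for $\nabla_xw$ orthogonal to $\nabla_x\psi$ it is negative of size $\rho\l|\nabla_x\varphi_\rho||\nabla_xw|^2$, and increasing $\rho$ or $\l$ makes it worse, not better. Your remark that ``the favorable quadratic form proportional to $a|\nabla_x\psi|^2$ swallows the Hessian error'' works only for the zero-order $\l^3$ term (where the paper's ${\mathcal H}_3$ gives the coefficient $4a_0^2\alpha\rho$), because for the gradient term the favorable part is only the rank-one form $\nabla_x\psi\otimes\nabla_x\psi$. Moreover, having bounded $2(S_\l w,A_\l w)_2\le 2\|P_\l w\|_2^2+2\|R_\l w\|_2^2$, you discard $\|S_\l w\|_2^2$ entirely, so you have removed the only mechanism available to generate a positive multiple of $\l\int|\nabla_x\varphi_\rho||\nabla_xw|^2$.

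The paper avoids both problems by two devices you are missing. First, the term $\l(a\Delta_x\varphi+\nabla_xa\cdot\nabla_x\varphi)w$, which you relegate to the remainder $R_\l$, is kept inside the ``antisymmetric'' factor ${\mathcal L}^-_{1,\l}$: its cross product with $-{\rm div}_x(a\nabla_xw)$ produces $+2\l\int a^2\Delta_x\varphi|\nabla_xw|^2$, which cancels exactly the dangerous part of $-2\l\int{\rm div}_x(a^2\nabla_x\varphi)|\nabla_xw|^2$, leaving gradient errors of order $\l$ with constants independent of $\rho$ (the terms collected in ${\mathcal K}$). Second, the positive gradient term is not extracted from the cross product at all: the paper retains $\|{\mathcal L}^+_{1,\l}w\|_2^2$ in \eqref{6.12} and, in Step 4, lower-bounds it through the weighted identity obtained by integrating $\l|\nabla_x\varphi|\,w\,{\rm div}_x(a\nabla_xw)$ by parts, yielding $4\ve a_0\rho^{1/2}\l\,{\mathcal I}_2(w)$ minus $O(\ve^2\rho\l^3){\mathcal I}_1(w)$; the latter loss is then absorbed by the good $\l^3$ term after fixing $\ve^2=a_0^2\alpha/2$ and taking $\rho_0$ large. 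Without reinstating these two steps (or an equivalent substitute), your argument does not close.
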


\begin{corollary}
Suppose that $g$ and $h$ vanish in $Q_T$. If $u$ is a solution to problem $\eqref{5.1}$ in $Q_T$, then $u=0$.
\end{corollary}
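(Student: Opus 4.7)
The plan is to apply the Carleman estimate \eqref{6.72} of Theorem \ref{thm-main1} directly with $v=u$ and to read off $u\equiv 0$ from the vanishing of the right-hand side.

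First I would observe that, since $g\equiv 0$ and $h\equiv 0$, problem \eqref{5.1} reduces to ${\mathcal P}u=0$ almost everywhere in $Q_T$, together with the homogeneous boundary conditions $u=0$ on $[0,T]\times \partial_*(\Om\times {\mathcal O})$ and $D_{\nu}u=0$ on $[0,T]\times \Gamma\times {\mathcal O}$. Moreover, $u$ has exactly the regularity required by Theorem \ref{thm-main1}, since it is assumed to lie in $H^1((0,T);L^2_{\C}(\Om\times {\mathcal O}))\cap L^2((0,T);{\mathcal H}^2_{\C}(\Om\times {\mathcal O}))$; the fact that the statement of Theorem \ref{thm-main1} refers to $\Om\times \R^N$ in the function class is harmless, because $u$ vanishes on $\Om\times \partial {\mathcal O}$, so its trivial extension to $\Om\times \R^N$ preserves the required regularity in view of the $C^2$-smoothness of $\partial{\mathcal O}$, and ${\mathcal P}u=0$ is preserved as well.

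Next, substituting $v=u$ and choosing $\l=\widetilde\l_0$ in \eqref{6.72}, the right-hand side becomes $64\int_{Q_T}|{\mathcal P}u|^2{\rm e}^{2\l \varphi_{\rho_0}}\,\rmd t\rmd x\rmd y$, which is identically zero. Hence
\begin{equation*}
\int_{Q_T}\bigl(\l|\nabla_x\varphi_{\rho_0}||\nabla_xu|^2+\l^3|\nabla_x\varphi_{\rho_0}|^3u^2\bigr){\rm e}^{2\l \varphi_{\rho_0}}\,\rmd t\rmd x\rmd y\le 0.
\end{equation*}

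To conclude, I would note that the integrand is pointwise non-negative, the weight ${\rm e}^{2\l \varphi_{\rho_0}}$ is strictly positive on $(0,T)\times \Om\times {\mathcal O}$, and, by the explicit formula $\nabla_x\varphi_{\rho_0}=\rho_0\,{\rm e}^{\rho_0\psi}\nabla_x\psi/\ell(t)$ combined with Lemma \ref{lemma-1}(ii) (which guarantees $|\nabla_x\psi|\ge \a>0$ on $\ov\Om$), the factor $|\nabla_x\varphi_{\rho_0}|$ is itself strictly positive everywhere on $(0,T)\times \ov\Om$. Consequently the $u^2$-term forces $u=0$ almost everywhere in $Q_T$, which is the thesis. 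There is no real obstacle to overcome here: all the difficulty has been absorbed into the proof of Theorem \ref{thm-main1}, and the corollary is a one-line consequence obtained by plugging a solution with homogeneous data into the Carleman estimate.
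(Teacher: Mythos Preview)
Your proof is correct and is exactly the paper's approach: the paper observes that $g=h=0$ forces $\widetilde g=0$, applies \eqref{6.72} to $v=u-h=u$, and reads off $u=0$. Your only addition is the digression about extending $u$ by zero to $\Om\times\R^N$; this is unnecessary (the ``$\R^N$'' in the function class of Theorem~\ref{thm-main1} is a misprint for $\mathcal{O}$, as its proof and Theorem~\ref{thm-main2} make clear) and is also not quite right as stated, since trivial extension across $\Om\times\partial\mathcal{O}$ preserves $H^2$-regularity only if the \emph{normal derivative} of $u$ vanishes there as well, which the data do not guarantee.
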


\begin{proof}
If $g=h=0$ in $Q_T$, then ${\widetilde g}=0$ in $Q_T$. Estimate (\ref{6.72}) yields $v=0$ in $Q_T$.
This, in turn, implies, via the equality $v=u-h$, $u=0$ in $Q_T$, so that the principle of {\it unique  continuation} holds for the solution to problem (\ref{5.1}).
\end{proof}

The proof of Theorem \ref{thm-main1} is a consequence of the following theorem for real-valued functions and the principal part ${\mathcal P}_0$ of operator ${\mathcal P}$.

\begin{theorem}[Carleman estimates in a simplified case]
\label{thm-main2}
Two positive constants $\lambda_0=\lambda_0(a_0,\|a\|_{2,\infty},\|{\rm div}\,b\|_{\infty},\|\psi\|_{3,\infty},\alpha,T)$ and $\rho_0=\rho_0(a_0,\|a\|_{1,\infty},\|\psi\|_{2,\infty},\alpha)$ exist such that
\begin{equation}
\int_{Q_T} \big (\l|\nabla_x\varphi_{\rho_0}||\nabla_xv|^2 + \l^3|\nabla_x\varphi_{\rho_0}|^3v^2\big)
{\rm e}^{2\l \varphi_{\rho_0}}\rmd t\rmd x\rmd y
\le \frac{32}{3}\int_{Q_T} |{\mathcal P}_0v|^2{\rm e}^{2\l \varphi_{\rho_0}}\rmd t\rmd x\rmd y,
\label{6.73}
\end{equation}
for all $v\in H^1((0,T);L^2(\Omega\times{\mathcal O}))\cap L^2((0,T);{\mathcal H}^2(\Om\times{\mathcal O}))$ and all $\l\ge\l_0$.
Here, ${\mathcal P}_0v=D_tv-{\rm div}_x(a\nabla_xv)-b\cdot\nabla_yv$.
\end{theorem}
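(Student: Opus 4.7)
The plan is to follow the classical Fursikov--Imanuvilov scheme: conjugate by the Carleman weight, split the conjugated operator into formally symmetric and skew-symmetric parts (with respect to the $L^2$ inner product on functions vanishing on $\partial_*(\Omega\times{\mathcal O})$), and extract a lower bound from the cross term, while adapting the argument to the $y$-transport $-b\cdot\nabla_y$ and the unboundedness of ${\mathcal O}$. Setting $\varphi=\varphi_{\rho_0}$ and $w={\rm e}^{\l\varphi}v$, and using that $\varphi$ is independent of $y$, a direct computation gives
\begin{align*}
Lw := {\rm e}^{\l\varphi}{\mathcal P}_0({\rm e}^{-\l\varphi}w) = & D_tw - \l(D_t\varphi)w -{\rm div}_x(a\nabla_xw) + 2\l a\nabla_x\varphi\cdot\nabla_xw\\
& - \l^2a|\nabla_x\varphi|^2w + \l aw\Delta_x\varphi + \l w\nabla_xa\cdot\nabla_x\varphi - b\cdot\nabla_yw.
\end{align*}
I decompose $Lw = L_sw + L_aw + Rw$, where $L_sw = -{\rm div}_x(a\nabla_xw)-\l^2a|\nabla_x\varphi|^2w$ is formally self-adjoint, $L_aw = D_tw + 2\l a\nabla_x\varphi\cdot\nabla_xw + \l\,{\rm div}_x(a\nabla_x\varphi)w - b\cdot\nabla_yw - \tfrac12({\rm div}\,b)w$ is formally skew-adjoint (on functions vanishing on $\partial_*(\Omega\times{\mathcal O})$), and $Rw$ collects the remaining lower-order pieces ($-\l(D_t\varphi)w$ and the skew-adjointness corrections for the $x$- and $y$-transports).

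The core of the proof is the inequality $\|Lw\|_2^2 \ge 2(L_sw,L_aw)_2 - C\|Rw\|_2^2$, combined with the pointwise bounds of Lemma \ref{stima-varphi-rho}. I plan to show
\[
(L_sw,L_aw)_2 \ge c_0\,\l\!\int_{Q_T}\!a|\nabla_x\varphi||\nabla_xw|^2\,\rmd t\,\rmd x\,\rmd y + c_1\,\l^3\!\int_{Q_T}\!a^2|\nabla_x\varphi|^3w^2\,\rmd t\,\rmd x\,\rmd y,
\]
up to bulk terms that can be absorbed and boundary contributions that are either zero or nonnegative. The two decisive bulk terms come from $(-{\rm div}_x(a\nabla_xw),\,2\l a\nabla_x\varphi\cdot\nabla_xw)_2$, which after one integration by parts in $x$ yields the weighted gradient term via the lower bound $|\nabla_x\varphi|\ge\a\rho/\ell(t)$, and from $(-\l^2a|\nabla_x\varphi|^2w,\,2\l a\nabla_x\varphi\cdot\nabla_xw)_2$, which after one more integration by parts yields the weighted zeroth-order term via $\nabla_x\varphi\cdot\nabla_x(|\nabla_x\varphi|^2)\sim\rho|\nabla_x\varphi|^3$. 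All remaining bulk terms---those from $D_tw$, $\l aw\Delta_x\varphi$, $\l w\nabla_xa\cdot\nabla_x\varphi$, the skew-adjointness corrections, and $Rw$---are estimated by Lemma \ref{stima-varphi-rho} in terms of strictly smaller powers of $|\nabla_x\varphi|$ (gaining at least one extra factor of $\rho$), so they are absorbed by first choosing $\rho_0$ large in terms of $(a_0,\|a\|_{1,\infty},\|\psi\|_{2,\infty},\a)$ and then $\l_0$ large in terms of the remaining data norms.

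Boundary issues are resolved by the geometry of Lemma \ref{lemma-1} together with the boundary conditions on $v$. The time-traces at $t=0,T$ vanish because $\varphi_{\rho_0}\to-\infty$ at those endpoints, suppressing the factor ${\rm e}^{2\l\varphi}$. On the lateral boundary, the Dirichlet condition $v=0$ forces $w=0$ on $(0,T)\times\partial_*(\Omega\times{\mathcal O})$, so only terms involving $D_\nu w$ survive; these vanish on $(0,T)\times\Gamma\times{\mathcal O}$ because $v=D_\nu v=0$ there, while on $(0,T)\times(\partial\Omega\setminus\Gamma)\times{\mathcal O}$ the remaining boundary integral reduces, up to positive factors, to
\[
-\int_0^T\!\!\int_{(\partial\Omega\setminus\Gamma)\times{\mathcal O}}\l a\,D_\nu\varphi\,|D_\nu w|^2\,\rmd\s\,\rmd y\,\rmd t \;\ge\; 0,
\]
since $D_\nu\varphi$ inherits the sign of $D_\nu\psi\le 0$ from Lemma \ref{lemma-1}(iii). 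The unboundedness of ${\mathcal O}$ is dealt with by a standard cutoff-and-density argument: every integration by parts in $y$ is first carried out on compactly supported test functions and then extended to $w\in L^2((0,T);{\mathcal H}^2(\Omega\times{\mathcal O}))$ using $b\in (W^{1,\infty}_{\rm loc}({\mathcal O}))^N$, ${\rm div}\,b\in L^\infty$ and the fact that $w$ vanishes on $\Omega\times\partial{\mathcal O}$.

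The main obstacle is precisely the interplay between the $y$-transport and the symmetric part $L_sw$: when $b\cdot\nabla_yw$ is paired with $-{\rm div}_x(a\nabla_xw)$ and with $\l^2a|\nabla_x\varphi|^2w$, one must show that the resulting commutator-type integrals reduce, after integration by parts in $y$, to quantities of the form $-\tfrac12\int({\rm div}\,b)\,a|\nabla_xw|^2$ and $-\tfrac12\l^2\int({\rm div}\,b)\,a|\nabla_x\varphi|^2 w^2$, with no residual $y$-boundary flux at infinity, and are then dominated by the two main positive terms for $\l$ large (this is why $\|{\rm div}\,b\|_\infty$ enters $\l_0$ but not $\rho_0$). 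Once the cross-term estimate is in place, reverting to $v$ via $w={\rm e}^{\l\varphi}v$ and performing the arithmetic-geometric balancing of the coefficients of $\|L_sw\|_2^2$, $\|L_aw\|_2^2$ and the cross term yields \eqref{6.73}, the explicit constant $32/3$ arising from the optimal choice of the absorbing parameters.
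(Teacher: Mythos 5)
Your overall strategy (conjugate by the weight, split the conjugated operator, bound the cross term, use Lemma \ref{lemma-1}(iii) for the lateral boundary sign and the vanishing of $w$ at $t=0,T$) is the same as the paper's, but two specific steps in your plan would fail. First, you move the term $-\lambda(D_t\varphi)w$ into the remainder $Rw$ and propose to control it through $\|Rw\|_2^2$. This cannot work: by Lemma \ref{stima-varphi-rho}, $|D_t\varphi|\simeq |\nabla_x\varphi|^2$ (up to constants depending on $\rho$), so $\|\lambda(D_t\varphi)w\|_2^2\simeq \lambda^2\int_{Q_T}|\nabla_x\varphi|^4w^2\,\rmd t\rmd x\rmd y$, whereas the positive term at your disposal is only $\lambda^3\int_{Q_T}|\nabla_x\varphi|^3w^2\,\rmd t\rmd x\rmd y$. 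Since $|\nabla_x\varphi|\ge \alpha\rho/\ell(t)$ blows up as $t\to 0^+,T^-$, you can trade low powers of $|\nabla_x\varphi|$ for high ones but not conversely, so the absorption ``$\lambda^2|\nabla_x\varphi|^4\le \varepsilon\lambda^3|\nabla_x\varphi|^3$'' fails near the time endpoints no matter how large $\lambda$ is. The term must be kept inside the symmetric block (as in the paper's ${\mathcal L}^+_{1,\lambda}$), so that it is only ever paired with $D_tw$ and, after an integration by parts in time (boundary-free because the weight kills $w$ at $t=0,T$), contributes through $D_t^2\varphi$ and $\nabla_xD_t\varphi$, which are $O(|\nabla_x\varphi|^3)$ and hence absorbable.

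Second, your source for the weighted gradient term is wrong. The pairing $(-{\rm div}_x(a\nabla_xw),\,2\lambda a\nabla_x\varphi\cdot\nabla_xw)_2$ does not produce $c_0\lambda\int a|\nabla_x\varphi||\nabla_xw|^2$: after integrating by parts it yields $-2\lambda\int{\rm div}_x(a^2\nabla_x\varphi)|\nabla_xw|^2$, whose leading part is \emph{negative} of size $\lambda\rho|\nabla_x\varphi||\nabla_xw|^2$, together with the Hessian term whose $\rho$-enhanced positive piece $4\lambda\rho^2\ell^{-1}{\rm e}^{\rho\psi}\int a^2(\nabla_x\psi\cdot\nabla_xw)^2$ controls only the $\nabla_x\psi$-component of $\nabla_xw$. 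The negative piece is cancelled by the cross product with the zeroth-order part $\lambda\,{\rm div}_x(a\nabla_x\varphi)w$ of your skew block (this cancellation is the point of the paper's ${\mathcal K}(a,\varphi,w)$), but what survives are only indefinite gradient terms of size $C\lambda\,{\mathcal I}_2(w)$ with $C$ \emph{independent of} $\rho$. To beat them one needs a positive gradient term whose coefficient grows with $\rho$ — this is exactly how $\rho_0$ gets chosen — and your plan provides none. The paper obtains it separately (Step 4): it retains $\|{\mathcal L}^+_{1,\lambda}w\|_2^2$ in the lower bound, extracts $\int\lambda^{-1}|\nabla_x\varphi|^{-1}({\rm div}_x(a\nabla_xw))^2$ from it, and by a further integration by parts converts this into $4\varepsilon a_0\rho^{1/2}\lambda\,{\mathcal I}_2(w)$ at the price of $4\varepsilon^2\rho\lambda^3{\mathcal I}_1(w)$, which is then absorbed by the $\lambda^3$ term. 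Without this (or an equivalent mechanism using the second-order operator), the choice of $\rho_0$ in terms of $a_0$, $\|a\|_{1,\infty}$, $\|\psi\|_{2,\infty}$, $\alpha$ cannot be made and the estimate \eqref{6.73} does not close; the rest of your outline (treatment of $b\cdot\nabla_y$ via ${\rm div}\,b$, boundary signs, cutoff in $y$) is consistent with the paper.
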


\begin{proof}[Proof of Theorem $\ref{thm-main1}$]
Fix $v\in H^1((0,T);L^2_{\C}(\Omega\times{\mathcal O}))\cap L^2((0,T);{\mathcal H}^2_{\C}(\Om\times{\mathcal O}))$. Then, $v_1={\rm Re}\,v$ and
$v_2={\rm Im}\,v$ belong to the space $H^1((0,T);L^2(\Omega\times{\mathcal O}))\cap L^2((0,T);
{\mathcal H}^2(\Om\times{\mathcal O}))$. Consequently, both $v_1$ and $v_2$ satisfy estimate (\ref{6.73}),
where $v$ is replaced with $v_j$, $j=1,2$. Since the coefficients of the operator ${\mathcal P}_0$ are all real-valued functions,
$|{\mathcal P}_0v|^2=|{\mathcal P}_0v_1|^2+|{\mathcal P}_0v_2|^2$. Therefore, summing the Carleman estimates for $v_1$ and $v_2$, we get
(\ref{6.73}) for $v$.

To show that $v$ satisfies (\ref{6.72}), we take advantage of the elementary inequalities
\begin{eqnarray*}
|{\mathcal P}_0v|^2\le 3|{\mathcal P}v|^2+ 3\|c\|^2_{\infty}|\nabla_xv|^2+3\|b_0\|_{\infty}^2|v|^2
\end{eqnarray*}
and of (\ref{lemma-stima-1}), with $\rho=\rho_0$, implying
\begin{align*}
& \int_{Q_T} \big (\|c\|^2_{\infty}|\nabla_xv|^2 + \|b_0\|^2_{\infty}|v|^2\big ){\rm e}^{2\l \varphi_{\rho_0}}\,\rmd t\rmd x\rmd y
\no\\[2mm]
\le &\frac{T^2}{4\a\rho_0}\|c\|^2_{\infty}\int_{Q_T} |\nabla_x\varphi_{\rho_0}||\nabla_xv|^2{\rm e}^{2\l \varphi_{\rho_0}}\,\rmd t\rmd x\rmd y
\no\\[2mm]
& + \frac{T^6}{64\a^3\rho_0^3}\|b_0\|^2_{\infty}\int_{Q_T} |\nabla_x\varphi_{\rho_0}|^3|v|^2
{\rm e}^{2\l \varphi_{\rho_0}}\,\rmd t\rmd x\rmd y.
\end{align*}
Choose now
\begin{eqnarray*}
\l\ge \frac{16T^2}{\a\rho_0}\|c\|^2_{\infty},\qquad\;\,
\l^3\ge \frac{T^6}{\a^3\rho_0^{3}}\|b_0\|^2_{\infty}.
\end{eqnarray*}
Then, (\ref{6.72}) holds with $\widetilde\l_0$ being defined by
\begin{eqnarray*}
{\widetilde \l_0}=\max\Big\{\l_0,\frac{16T^2}{\a\rho_0}\|c\|^2_{\infty},
\frac{T^2}{\a\rho_0}\|b\|^{2/3}_{\infty}\Big\}.
\end{eqnarray*}
\end{proof}

\begin{proof}[Proof of Theorem $\ref{thm-main2}$]
Let $w_\rho:Q_T\to\R$ be the function defined by
\begin{eqnarray}
w_{\rho}(t,x,y)=v(t,x,y){\rm e}^{\l \varphi_\rho(t,x)},\qquad\;\, (t,x,y)\in (0,T)\times\Omega\times {\mathcal O},
\label{def-w}
\end{eqnarray}
depending on the positive parameters $\l$ and $\rho$. According to the definitions of $\varphi_{\rho}$ we easily deduce that $w_{\rho}$ has the same degree of smoothness as $v$. Moreover, $\|w_{\rho}(t,\cdot)\|_{H^2(\Omega\times{\mathcal O})}$ and $w_{\rho}(t,x,y)$ tend to $0$ as $t\to 0^+$ and $t\to T^-$, the latter one for any $(x,y)\in\Omega\times\mathcal{O}$.

For almost all the proof, to avoid cumbersome notation, we simply write $w$ and $\varphi$ instead of $w_\rho$ and $\varphi_{\rho}$.

Define the linear operator $\mathcal{L}_\l$ by
\begin{equation}
{\mathcal L}_\l w={\rm e}^{\l \varphi}{\mathcal P}_0(w{\rm e}^{-\l \varphi}).
\label{6.6}
\end{equation}
After some computations we can split ${\mathcal L}_{\lambda}$ into the sum ${\mathcal L}_\l={\mathcal L}^+_{1,\l}+({\mathcal L}^-_{1,\l}-b\cdot\nabla_y)$,
where
\begin{align}
&{\mathcal L}^+_{1,\l}w = - {\rm div}_x(a\nabla_xw) -\l(D_t\varphi+\l a|\nabla_x \varphi|^2)w
=:\sum_{k=1}^2{\mathcal L}^+_{1,\l,k} w,
\label{6.10}
\\[2mm]
&{\mathcal L}^-_{1,\l}w = D_tw + 2\l a\nabla_x\varphi\cdot\nabla_x w
+ \l(a\Delta_x\varphi+\nabla_xa\cdot\nabla_x\varphi)w
=: \sum_{k=1}^3{\mathcal L}^-_{1,\l,k} w.
\label{6.11}
\end{align}
Clearly,
\begin{align}
\|{\mathcal L}_\l w\|^2_2&=\|{\mathcal L}^+_{1,\l}w\|^2_2
+\|{\mathcal L}^-_{1,\l}w-b\cdot\nabla_yw\|^2_2
+2({\mathcal L}^+_{1,\l}w,{\mathcal L}^-_{1,\l}w-b\cdot\nabla_yw)_2
\no\\[1mm]
&\ge \|{\mathcal L}^+_{1,\l}w\|^2_2 + 2({\mathcal L}^+_{1,\l}w,{\mathcal L}^-_{1,\l}w)_2
- 2({\mathcal L}^+_{1,\l}w,b\cdot\nabla_yw)_2.
\label{6.12}
\end{align}

To rewrite the terms $({\mathcal L}^+_{1,\l}w,{\mathcal L}^-_{1,\l}w)_2$ and $({\mathcal L}^+_{1,\l}w,b\cdot\nabla_yw)_2$ in a more convenient way,
we perform several integrations by parts.

As the rest of the proof is rather long, we split it into five steps and, for notational convenience, we set
\begin{eqnarray}
\label{I12}
{\mathcal I}_1(w)=\int_{Q_T}|\nabla_x\varphi|^3w^2\rmd t\rmd x\rmd y,\qquad\;\,
{\mathcal I}_2(w)=\int_{Q_T}|\nabla_x\varphi||\nabla_xw|^2\rmd t\rmd x\rmd y.
\end{eqnarray}
Moreover, we denote by $C_j$ positive constants which depend only on the quantities in brackets.
\smallskip

{\em Step 1: the term $2({\mathcal L}^+_{1,\l}w,{\mathcal L}^-_{1,\l}w)_2$.} We split this term into the sum
of the addenda $({\mathcal L}^+_{1,\l,i}w,{\mathcal L}^-_{1,\l,j}w)_2$ ($i=1,2$, $j=1,2,3$).

We claim that $({\mathcal L}^+_{1,\l,1}w,{\mathcal L}^-_{1,\l,1}w)_2=0$. To prove the claim, we need to integrate by parts. For this purpose, we approximate function $v$ by a sequence $\{v_n\}\subset H^1((0,T);L^2({\mathcal O};H^2(\Om)\cap H^1_0(\Om)))$, converging to $v$ in $L^2(Q_T)$, together with its first-order time derivative and first- and second-order spatial derivatives with respect to $x$.
Set $w_n={\rm e}^{\lambda\varphi}v_n$ and observe that, integrating by parts with respect to the variable $t$ and recalling that, for any $n\in\N$, $w_n=0$ on $(0,T)\times \partial \Om\times {\mathcal O}$ and $\nabla_x w_n(t,\cdot,\cdot)$ tends to $0$ in $(L^2(\Om\times{\mathcal O}))^N$ as $t\to 0^+$ and $t\to T^-$, we easily see that
\begin{align*}
\int_{Q_T} {\rm div}_x(a\nabla_xw_n)D_tw_n \rmd t\rmd x\rmd y=&
-\int_{Q_T} a\nabla_xw\cdot D_t\nabla_x w_n \rmd t\rmd x\rmd y\\
=&-\frac{1}{2} \int_{Q_T} aD_t|\nabla_x w_n|^2 \rmd t\rmd x\rmd y\\
=&0.
\end{align*}
Letting $n\to +\infty$ gives
\begin{eqnarray*}
\int_{Q_T} {\rm div}_x(a\nabla_xw)D_tw\rmd t\rmd x\rmd y=0.
\end{eqnarray*}

As far as the term $2({\mathcal L}^+_{1,\l,2}w,{\mathcal L}^-_{1,\l,1}w)_2$ is concerned, we observe that
\begin{align*}
2({\mathcal L}^+_{1,\l,2}w,{\mathcal L}^-_{1,\l,1}w)_2
&=-2\l\int_{Q_T}(D_t\varphi+\l a|\nabla_x \varphi|^2)w D_tw\, \rmd t\rmd x\rmd y
\no\\
&=-\l\int_{Q_T}(D_t\varphi+\l a|\nabla_x \varphi|^2)D_t(w^2) \rmd t\rmd x\rmd y
\no\\
&=\l\int_{Q_T} w^2(D^2_t\varphi+\l aD_t|\nabla_x \varphi|^2)\rmd t\rmd x\rmd y.
\end{align*}

Computing the terms $2({\mathcal L}^+_{1,\l,1}w,{\mathcal L}^-_{1,\l,2}w)_2$ is much trickier.
Integrating twice by parts, we can write
\begin{align}
2({\mathcal L}^+_{1,\l,1}w,{\mathcal L}^-_{1,\l,2}w)_2=&
-4\lambda\int_{(0,T)\times\partial\Omega\times{\mathcal O}}
aD_{\nu}w(a\nabla_x\varphi\cdot\nabla_xw)\rmd t\rmd\sigma(x)\rmd y\no\\
&+2\l\int_{Q_T} a^2\nabla_x\varphi\cdot\nabla_x(|\nabla_xw|^2)\rmd t\rmd x\rmd y
\no\\
&+4\l\int_{Q_T} a\sum_{j,k=1}^{M}\,D_{x_j}(aD_{x_k}\varphi)(D_{x_k}w D_{x_j}w)\rmd t\rmd x\rmd y\no\\
=&-4\lambda\int_{(0,T)\times\partial\Omega\times{\mathcal O}}
aD_{\nu}w(a\nabla_x\varphi\cdot\nabla_xw)\rmd t\rmd\sigma(x)\rmd y\no\\
&+2\l\int_{(0,T)\times\partial\Omega\times{\mathcal O}} a^2D_{\nu}\varphi |\nabla_xw|^2\rmd t\rmd\sigma(x)\rmd y\no\\
&-2\l\int_{Q_T}{\rm div}_x(a^2\nabla_x\varphi)|\nabla_xw|^2\rmd t\rmd x\rmd y\no\\
&+4\l\int_{Q_T} a(\nabla_xa\cdot\nabla_xw)(\nabla_x\varphi\cdot\nabla_xw)\rmd t\rmd x\rmd y
\no\\
&+4\l\int_{Q_T} a^2\sum_{j,k=1}^{M}D_{x_j}D_{x_k}\varphi(D_{x_k}w D_{x_j}w)\rmd t\rmd x\rmd y.
\label{AAA}
\end{align}
To rewrite the integrals on $(0,T)\times\partial\Omega\times{\mathcal O}$, we observe that,
since $w\equiv 0$ on $(0,T)\times\partial \Om\times {\mathcal O}$, $\nabla_xw=D_{\nu}w\,{\boldsymbol\nu}$ on $(0,T)\times\partial \Om\times{\mathcal O}$. Therefore,
$\nabla_x\varphi\cdot \nabla_x w = \rho \ell^{-1}{\rm e}^{\rho\psi}D_{\nu}\psi D_{\nu}w$ on the same set.
Moreover, $\nabla_x w=0$ on $(0,T)\times\Gamma\times{\mathcal O}$ and $D_{\nu}\varphi=\rho\ell^{-1}{\rm e}^{\rho\psi}D_{\nu}\psi \le 0$ on $(0,T)\times(\partial \Om\setminus \Gamma)\times{\mathcal O}$.
We can thus write
\begin{align}
&-4\lambda\int_{(0,T)\times\partial\Omega\times{\mathcal O}}
aD_{\nu}w(a\nabla_x\varphi\cdot\nabla_xw)\rmd t\rmd\sigma(x)\rmd y\no\\
=&4\rho\lambda\int_{(0,T)\times(\partial\Omega\setminus\Gamma)\times{\mathcal O}}
\frac{a^2}{\ell}|D_{\nu}\psi|(D_{\nu}w)^2{\rm e}^{\rho\psi}\rmd t\rmd\sigma(x)\rmd y
\label{BBB}
\end{align}
and
\begin{align}
&2\l\int_{(0,T)\times\partial\Omega\times{\mathcal O}} a^2D_{\nu}\varphi |\nabla_xw|^2\rmd t\rmd\sigma(x)\rmd y\no\\
=-&2\rho\l\int_{(0,T)\times(\partial\Omega\setminus\Gamma)\times{\mathcal O}} \frac{a^2}{\ell}|D_{\nu}\psi|(D_{\nu}w)^2{\rm e}^{\rho\psi}\rmd t\rmd\sigma(x)\rmd y.
\label{CCC}
\end{align}
From (\ref{AAA}), (\ref{BBB}) and (\ref{CCC}) we get
\begin{align*}
2({\mathcal L}^+_{1,\l,1}w,{\mathcal L}^-_{1,\l,2}w)_2=&
2\rho\l\int_{(0,T)\times(\partial\Omega\setminus\Gamma)\times{\mathcal O}} \frac{a^2}{\ell}|D_{\nu}\psi||D_{\nu}w|^2{\rm e}^{\rho\psi}\rmd t\rmd\sigma(x)\rmd y\no\\
&-2\l\int_{Q_T}{\rm div}_x(a^2\nabla_x\varphi)|\nabla_xw|^2\rmd t\rmd x\rmd y\no\\
&+4\l\int_{Q_T} a(\nabla_xa\cdot\nabla_xw)(\nabla_x\varphi\cdot\nabla_xw)\rmd t\rmd x\rmd y
\no\\
&+4\l\int_{Q_T} a^2\sum_{j,k=1}^{M}D_{x_j}D_{x_k}\varphi(D_{x_k}w D_{x_j}w)\rmd t\rmd x\rmd y.
\end{align*}

Further, straightforward integrations by parts, where we take into account that $w$ vanishes on $(0,T)\times\partial\Omega\times{\mathcal O}$, show that
\begin{align*}
2({\mathcal L}^+_{1,\l,1}w,{\mathcal L}^-_{1,\l,3}w)_2
=&2\l \int_{Q_T} a(a\Delta_x\varphi+\nabla_xa\cdot\nabla_x\varphi)|\nabla_xw|^2\rmd t\rmd x\rmd y
\no\\
&+2\l\int_{Q_T} aw\nabla_xw\cdot\nabla_x(a\Delta_x\varphi+\nabla_xa\cdot\nabla_x\varphi)\rmd t\rmd x\rmd y
\end{align*}
and
\begin{align*}
2({\mathcal L}^+_{1,\l,2}w,{\mathcal L}^-_{1,\l,2}w)_2
=& -2\lambda^2\int_{Q_T}
a(D_t\varphi+\l a|\nabla_x\varphi|^2)\nabla_x\varphi\cdot \nabla_x (w^2)\,\rmd t\rmd x\rmd y
\no\\
=& 2\lambda^2\int_{Q_T}
w^2{\rm div}_x[a(D_t\varphi+\l a|\nabla_x\varphi|^2)\nabla_x\varphi]\,\rmd t\rmd x\rmd y\\
=&2\lambda^2\int_{Q_T}w^2(D_t\varphi+\l a|\nabla_x\varphi|^2){\rm div}_x(a\nabla_x\varphi)\,\rmd t\rmd x\rmd y\\
&+2\lambda^2\int_{Q_T}w^2a\nabla_x\varphi\cdot\nabla_x(D_t\varphi+\l a|\nabla_x\varphi|^2)\,\rmd t\rmd x\rmd y.
\end{align*}
Finally, we have
\begin{eqnarray*}
2({\mathcal L}^+_{1,\l,2}w,{\mathcal L}^-_{1,\l,3}w)_2
= -2\l^2\int_{Q_T}  w^2(D_t\varphi+\l a|\nabla_x \varphi|^2){\rm div}_x(a\nabla_x\varphi)\rmd t\rmd x\rmd y.
\end{eqnarray*}
Summing the previous formulas we get
\begin{align}
2({\mathcal L}^+_{1,\l}w,{\mathcal L}^-_{1,\l}w)_2=&
\lambda\int_{Q_T}{\mathcal H}_1(\varphi)w^2\rmd t\rmd x\rmd y
+\l^2\int_{Q_T} {\mathcal H}_2(a,\varphi)w^2\rmd t\rmd x\rmd y
\no\\
&+\l^3\int_{Q_T} {\mathcal H}_3(a,\varphi)w^2\rmd t\rmd x\rmd y
+2\l{\mathcal K}(a,\varphi,w)\no\\
&+2\rho\l\int_{(0,T)\times(\partial \Om\setminus\Gamma)\times{\mathcal O}} \frac{a^2}{\ell}|D_{\nu}\psi|(D_{\nu}w)^2e^{\rho\psi}\rmd td\s(x)\rmd y,
\label{J2}
\end{align}
where
\begin{align}
\label{H1}
&{\mathcal H}_1(\varphi) =D^2_t\varphi,
\\[3mm]
\label{H2}
&{\mathcal H}_2(a,\varphi)=4a\nabla_x D_t\varphi\cdot\nabla_x\varphi,
\\[3mm]
\label{H3}
&{\mathcal H}_3(a,\varphi)=2a|\nabla_x \varphi|^2\nabla_xa\cdot\nabla_x\varphi
+2a^2\nabla_x\varphi\cdot\nabla_x(|\nabla_x\varphi|^2),
\\[3mm]
\label{H4}
&{\mathcal K}(a,\varphi,w)=
-\int_{Q_T} (a\nabla_xa\cdot\nabla_x\varphi)|\nabla_xw|^2\rmd t\rmd x\rmd y
\no\\
&\phantom{{\mathcal K}(a,\varphi,\nabla_xw)=}
+2\int_{Q_T} a(\nabla_xa\cdot\nabla_xw)(\nabla_x\varphi\cdot\nabla_xw)\rmd t\rmd x\rmd y
\no\\
&\phantom{{\mathcal K}(a,\varphi,\nabla_xw)=}
+2\int_{Q_T} a^2\sum_{j,k=1}^{M}\,D_{x_j}D_{x_k}\varphi (D_{x_k}w D_{x_j}w)\rmd t\rmd x\rmd y\no\\
&\phantom{{\mathcal K}(a,\varphi,\nabla_xw)=}
+\int_{Q_T} aw\nabla_xw\cdot\nabla_x(a\Delta_x\varphi+\nabla_xa\cdot\nabla_x\varphi) \rmd t\rmd x\rmd y
\end{align}

{\em Step 2: the term $-2({\mathcal L}^+_{1,\l}w,b\cdot\nabla_yw)_2$.} Since
\begin{eqnarray*}
(a\nabla_xw)\cdot \nabla_x(b\cdot\nabla_yw)
= \frac{1}{2} ab\cdot \nabla_{y}|\nabla_x w|^2,
\end{eqnarray*}
recalling that $w=0$ on $[0,T]\times\partial_*(\Omega\times{\mathcal O})$ implies $\nabla_x w=0$ on $[0,T]\times \Omega\times \partial{\mathcal O}$ and $\nabla_y w=0$ on $[0,T]\times\partial \Omega\times {\mathcal O}$, an integration by parts shows that
\begin{align}
-2({\mathcal L}^+_{1,\l}w,b\cdot\nabla_yw)_2=&
2\int_{Q_T} {\rm div}_x(a\nabla_xw)(b\cdot\nabla_yw)\rmd t\rmd x\rmd y
\no\\
&+2\l\int_{Q_T}(D_t\varphi+\l a|\nabla_x \varphi|^2)w (b\cdot\nabla_yw)\rmd t\rmd x\rmd y
\no\\
=&-\int_{Q_T} ab\cdot\nabla_y(|\nabla_xw|^2) \rmd t\rmd x\rmd y\no\\
&-\l\int_{Q_T} ({\rm div}\,b)(D_t\varphi+\l a|\nabla_x\varphi|^2)w^2\rmd t\rmd x\rmd y
\no\\
=&\int_{Q_T} a({\rm div}\,b)|\nabla_xw|^2\rmd t\rmd x\rmd y\no\\
&-\lambda\int_{Q_T} ({\rm div}\,b)(D_t\varphi+ \l a|\nabla_x \varphi|^2)w^2\rmd t\rmd x\rmd y.
\label{J3}
\end{align}

Summing up, from formulae (\ref{6.12}), (\ref{J2}) and (\ref{J3}), we obtain the following estimate from below for the norm of ${\mathcal L}_\l w$:
\begin{equation}
\|{\mathcal L}_\l w\|^2_2\ge
{\mathcal J}_1(w)+\|{\mathcal L}_{1,\lambda}^+w\|_2^2,
\label{split-1}
\end{equation}
where
\begin{align*}
{\mathcal J}_1(w)=& {\ov {\mathcal K}}(a,\varphi,w,\lambda)
+\lambda\int_{Q_T}{\ov {\mathcal H}}_1(b,\varphi)w^2\,\rmd t\rmd x\rmd y\\
&+\lambda^2\int_{Q_T}{\ov {\mathcal H}}_2(a,b,\varphi)w^2\,\rmd t\rmd x\rmd y
+\lambda^3\int_{Q_T}{\ov {\mathcal H}}_3(a,\varphi)w^2\,\rmd t\rmd x\rmd y
\end{align*}
and
\begin{align}
& {\ov{\mathcal H}}_1(b,\varphi)={\mathcal H}_1(\varphi)-({\rm div}\,b)D_t\varphi,
\label{A1}
\\
& {\ov{\mathcal H}}_2(a,b,\varphi)={\mathcal H}_2(a,\varphi)-a({\rm div}\,b)|\nabla_x\varphi|^2,
\label{A2}
\\
& {\ov{\mathcal H}}_3(a,\varphi)={\mathcal H}_3(a,\varphi),
\label{A3}
\\
& {\ov {\mathcal K}}(a,b,\varphi,w,\lambda)=2\l{\mathcal K}(a,\varphi,w)
+\int_{Q_T}a({\rm div}\,b)|\nabla_xw|^2\rmd t\rmd x\rmd y.
\label{A4}
\end{align}

{\em Step 3: estimate of ${\mathcal J}_1(w)$.}
As a first step, taking advantage of the formula
\begin{eqnarray*}
D_{x_k}D_{x_j}\varphi =\frac{\rho}{\ell}{\rm e}^{\rho\psi}\big (D_{x_k}D_{x_j}\psi+\rho D_{x_j}\psi D_{x_k}\psi
\big),
\end{eqnarray*}
we obtain (cf. (\ref{form-phi})
\begin{align*}
\sum_{j,k=1}^{M}D_{x_j}D_{x_k}\varphi (D_{x_k}w D_{x_j}w)
=& \frac{\rho}{\ell}{\rm e}^{\rho\psi}\sum_{j,k=1}^{M}D_{x_j}D_{x_k}\psi(D_{x_k}w D_{x_j}w)\\
&+\frac{\rho^2}{\ell}{\rm e}^{\rho\psi}(\nabla\psi\cdot\nabla w)^2\\
=& |\nabla_x \psi|^{-1}|\nabla_x \varphi|\sum_{j,k=1}^{M}D_{x_j}D_{x_k}\psi(D_{x_k}w D_{x_j}w)\\
&+\frac{\rho^2}{\ell}{\rm e}^{\rho\psi}(\nabla\psi\cdot\nabla w)^2\\
\ge &  -\frac{M}{\alpha}\|\psi\|_{2,\infty}|\nabla_x\varphi||\nabla w|^2,
\end{align*}
where $\alpha$ is the infimum of the function $|\nabla\psi|$ over $\Omega$.
Since
\begin{align*}
|a\nabla_x(a\Delta_x\varphi+\nabla_xa\cdot\nabla_x\varphi)|
\le &\|a\|_{\infty}\Bigg (\sqrt{M}\|a\|_{1,\infty}|\Delta_x\varphi|+
\|a\|_{\infty}|\nabla_x\Delta_x\varphi|\\
&\qquad\;\;\;\;\;\;+M\|a\|_{2,\infty}|\nabla_x\varphi|+M^{3/2}\|a\|_{1,\infty}\sum_{i,j=1}^M|D_{ij}\varphi|\Bigg )\\
\le & C_1(\|a\|_{2,\infty},\|\psi\|_{3,\infty},\rho,\alpha,T)|\nabla_x\varphi|^2,
\end{align*}
where we used (\ref{lemma-stima-1}) and the first inequalities in (\ref{lemma-stima-5}) and (\ref{lemma-stima-5bis}),
we can estimate (using H\"older inequality)
\begin{align*}
&2\lambda\int_{Q_T} aw\nabla_xw\cdot\nabla_x(a\Delta_x\varphi+\nabla_xa\cdot\nabla_x\varphi) \rmd t\rmd x\rmd y\\
\ge &-2\lambda\int_{Q_T} |w||\nabla_xw| |a\nabla_x(a\Delta_x\varphi+\nabla_xa\cdot\nabla_x\varphi)|\rmd t\rmd x\rmd y\\
\ge &
-2C_1(\|a\|_{2,\infty},\|\psi\|_{3,\infty},\rho,\alpha,T)
\int_{Q_T} (\lambda^{3/4}|\nabla_x\varphi|^{3/2}|w|)(\lambda^{1/4}|\nabla_x\varphi|^{1/2}|\nabla_xw|)\rmd t\rmd x\rmd y\\
\ge &-C_1(\|a\|_{2,\infty},\|\psi\|_{3,\infty},\rho,\alpha,T)\lambda^{3/2}
{\mathcal I}_1(w)
-C_1(\|a\|_{2,\infty},\|\psi\|_{3,\infty},\rho,\alpha,T)\lambda^{1/2}
{\mathcal I}_2(w),
\end{align*}
where ${\mathcal I}_1(w)$ and ${\mathcal I}_2(w)$ are defined in (\ref{I12}).
We conclude that
\begin{align*}
2\lambda{\mathcal K}(a,\varphi,w)\ge&
-C_1(\|a\|_{2,\infty},\|\psi\|_{3,\infty},\rho,\alpha,T)\lambda^{3/2}
{\mathcal I}_1(w)\\
&-\left [C_2(\|a\|_{1,\infty},\|\psi\|_{2,\infty},\alpha)\lambda
+C_1(\|a\|_{2,\infty},\|\psi\|_{3,\infty},\rho,\alpha,T)\lambda^{1/2}\right ]
{\mathcal I}_2(w).
\end{align*}
Moreover, from (\ref{lemma-stima-1}), and recalling that $\rho\ge 1$, we get
\begin{align*}
\left |\int_{Q_T}a({\rm div}\,b)|\nabla_xw|^2\rmd t\rmd x\rmd y\right |
\le &\|a\|_{\infty}\|{\rm div}\,b\|_{\infty}\int_{Q_T}|\nabla_xw|^2\rmd t\rmd x\rmd y
\\
\le&
\frac{T^2}{4\alpha}\|a\|_{\infty}\|{\rm div}\,b\|_{\infty}{\mathcal I}_2(w).
\end{align*}
Therefore, it follows that
\begin{align}
{\ov{\mathcal K}}(a,b,\varphi,\nabla_xw,\lambda)
\ge &-C_1(\|a\|_{2,\infty},\|\psi\|_{3,\infty},\rho,\alpha,T)\lambda^{3/2}
{\mathcal I}_1(w)\no\\
&-\Big [C_2(\|a\|_{1,\infty},\|\psi\|_{2,\infty},\a)\lambda
+C_1(\|a\|_{2,\infty},\|\psi\|_{3,\infty},\rho,\alpha,T)\lambda^{1/2}\no\\
&\qquad\;+\frac{T^2}{4\alpha}\|a\|_{\infty}\|{\rm div}\,b\|_{\infty}\Big ]{\mathcal I}_2(w).
\label{6.38a}
\end{align}

We now consider the terms containing ${\ov{\mathcal H}}_1(b,\varphi)$,
${\ov{\mathcal H}}_2(a,b,\varphi)$ and ${\ov{\mathcal H}}_3(a,\varphi)$
(cf. (\ref{A1})-(\ref{A3})). From (\ref{lemma-stima-1}), the second inequality in (\ref{lemma-stima-2}), (\ref{lemma-stima-2bis}), (\ref{lemma-stima-3}) and definitions (\ref{H1})-(\ref{H4})), we deduce the pointwise inequalities
\begin{align}
&|{\ov{\mathcal H}}_1(b,\varphi)|
\le C_3(\|{\rm div}\,b\|_{\infty},
\rho,\alpha,T)|\nabla_x\varphi|^3,
\label{stima-I1}\\
&|{\ov{\mathcal H}}_2(a,b,\varphi)|
\le C_4(\|a\|_\infty,\|{\rm div}\,b\|_{\infty},\alpha,T)|\nabla_x\varphi|^3,
\label{stima-I2}
\end{align}
where we have used the condition $\rho\ge 1$, and (recalling that $|\nabla_x\psi|\ge\alpha$)
\begin{align}
{\ov{\mathcal H}}_3(a,\varphi)=&2a|\nabla_x \varphi|^2\nabla_xa\cdot\nabla_x\varphi
+2a^2\nabla_x\varphi\cdot\nabla_x(|\nabla_x\varphi|^2)
\no\\
=&2a|\nabla_x \varphi|^2(\nabla_xa\cdot\nabla_x\varphi)
+2a^2\rho^3\ell^{-3}{\rm e}^{3\rho\psi}\nabla_x\psi\cdot\nabla_x(|\nabla_x\psi|^2)
\no\\
&+4a^2\rho^4\ell^{-3}{\rm e}^{3\rho\psi}|\nabla_x\psi|^4\no\\
\ge &4a^2_0\alpha\rho |\nabla_x\varphi|^3-C_5(\|a\|_{1,\infty},\|\psi\|_{2,\infty},\alpha)|\nabla_x\varphi|^3.
\label{stima-K3}
\end{align}

Summing up, from (\ref{6.38a})-(\ref{stima-K3}), we get the following estimate from below for ${\mathcal J}_1(w)$:
\begin{align}
{\mathcal J}_1(w)\ge &
\Big\{\left [4a_0^2\alpha\rho-C_5(\|a\|_{1,\infty},\|\psi\|_{2,\infty},\alpha)\right ]\lambda^3\no\\
&\;\;-C_4(\|a\|_{\infty},\|{\rm div}\,b\|_{\infty},\alpha,T)\lambda^2
-C_1(\|a\|_{2,\infty},\|\psi\|_{3,\infty},\rho,\alpha,T)\lambda^{3/2}
\no\\
&\;\;-C_3(\|{\rm div}\,b\|_{\infty},\rho,\alpha,T)\lambda\Big\}{\mathcal I}_1(w)
\no\\
&-\Big (C_2(\|a\|_{1,\infty},\|\psi\|_{2,\infty},\alpha)\lambda
+C_1(\|a\|_{2,\infty},\|\psi\|_{3,\infty},\rho,\alpha,T)\lambda^{1/2}\no\\
&\quad\;\;\;\;\;+\frac{T^2}{4\alpha}\|a\|_{\infty}\|{\rm div}\,b\|_{\infty}\Big ){\mathcal I}_2(w).
\label{stima-J1}
\end{align}

{\em Step 4: estimate of $\|{\mathcal L}_{1,\lambda}^+ w\|_2^2$.}
Using the inequalities
\begin{align*}
({\rm div}_x(a\nabla_xw))^2
=&\big[{\mathcal L}^+_{1,\l} w +  \l(D_t\varphi+\l a|\nabla_x \varphi|^2) w\big]^2
\no\\
\le & 3({\mathcal L}^+_{1,\l} w)^2 + 3\l^2 (D_t\varphi)^2w^2
+3\l^4a^2|\nabla_x \varphi|^4w^2,
\end{align*}
(cf. (\ref{6.10})), $|\nabla_x\varphi|\ge 4\rho\alpha T^{-2}$ (which follows from (\ref{lemma-stima-1}) and the first inequality in (\ref{lemma-stima-2}), we can infer that
\begin{align}
\frac{T^2}{\rho\alpha\lambda}\|{\mathcal L}_{1,\lambda}^+w\|_2^2\ge
&\int_{Q_T} \l^{-1}|\nabla_x \varphi|^{-1}\left ({\rm div}_x(a\nabla_x w)\right )^2\rmd t\rmd x\rmd y
\no\\
&-\big [4\|a\|_{\infty}^2\l^3+ C_6(\|\psi\|_{\infty},\rho,\alpha,T)\lambda\big ]{\mathcal I}_1(w).
\label{stima-I3-1}
\end{align}
Now we want to show that the integral term in (\ref{stima-I3-1})
can be estimated from below by a positive constant times
${\mathcal I}_2(w)$ minus some terms which can be controlled
by means of ${\mathcal I}_2(w)$ and the good term in ${\mathcal J}_2(w)$. For this purpose,
we begin by observing that an integration by parts yields
\begin{align*}
\rho^{1/2}\l\int_{Q_T} a|\nabla_x \varphi||\nabla_xw|^2\rmd t\rmd x\rmd y
=& \rho^{1/2}\l\int_{Q_T} |\nabla_x \varphi| (a\nabla_xw)\cdot\nabla_xw\, \rmd t\rmd x\rmd y
\no\\
=&-\rho^{1/2}\l\int_{Q_T} |\nabla_x \varphi|w\,{\rm div}_x(a\nabla_xw)\rmd t\rmd x\rmd y
\no\\
&-\frac{1}{2}\rho^{1/2}\l\int_{Q_T}a\nabla_x(|\nabla_x \varphi|)\cdot\nabla_x(w^2)\rmd t\rmd x\rmd y
\no\\
=&-\rho^{1/2}\l\int_{Q_T} |\nabla_x \varphi|w\,{\rm div}_x(a\nabla_xw)\rmd t\rmd x\rmd y
\no\\
&+\frac{1}{2}\rho^{1/2}\l\int_{Q_T} w^2\,{\rm div}_x[a\nabla_x(|\nabla_x \varphi|)]\rmd t\rmd x\rmd y.
\end{align*}
Since
\begin{align*}
\rho^{1/2}\l|\nabla_x \varphi||w\,{\rm div}_x(a\nabla_xw)|
&= (\l|\nabla_x \varphi|)^{-1/2}|{\rm div}_x(a\nabla_xw)|
\rho^{1/2}(\l|\nabla_x \varphi|)^{3/2}|w|\no\\
&\le\frac{1}{4\varepsilon}\l^{-1}|\nabla_x \varphi|^{-1}[{\rm div}_x(a\nabla_xw)]^2
+ \varepsilon\rho\l^3|\nabla_x \varphi|^3w^2,
\end{align*}
for any $\varepsilon>0$, we get
\begin{align*}
&\rho^{1/2}\l\int_{Q_T} a|\nabla_x \varphi| |\nabla_xw|^2\rmd t\rmd x\rmd y\no\\
\le &\frac{1}{4\varepsilon}\int_{Q_T}\l^{-1}|\nabla_x \varphi|^{-1}[{\rm div}_x(a\nabla_xw)]^2\rmd t\rmd x\rmd y
+ \rho\varepsilon\l^3{\mathcal I}_1(w)
\no\\
&+\frac{1}{2}\rho^{1/2}\l\int_{Q_T} w^2{\rm div}_x[a\nabla_x(|\nabla_x \varphi|)]\rmd t\rmd x\rmd y
\end{align*}
or, equivalently,
\begin{align}
&\int_{Q_T} \l^{-1}|\nabla_x \varphi|^{-1}[{\rm div}_x(a\nabla_xw)]^2\rmd t\rmd x\rmd y\no\\
\ge &
4\varepsilon a_0\rho^{1/2}\l{\mathcal I}_2(w)
- 4\varepsilon^2\rho\l^3{\mathcal I}_1(w)-2\varepsilon\rho^{1/2}\l\int_{Q_T} w^2{\rm div}_x[a\nabla_x(|\nabla_x \varphi|)]\rmd t\rmd x\rmd y.
\label{stima-I3-2}
\end{align}
Using the second estimates in (\ref{lemma-stima-5}) and (\ref{lemma-stima-5bis}) we can estimate
\begin{eqnarray}
|{\rm div}_x[a\nabla_x(|\nabla_x \varphi|)]|\le C_7(\|a\|_{1,\infty},\|\psi\|_{3,\infty},\alpha,T)
|\nabla_x\varphi|^3.
\label{stima-divergenza}
\end{eqnarray}
Replacing (\ref{stima-I3-2}) and (\ref{stima-divergenza}) into (\ref{stima-I3-1}) and assuming that
\begin{equation}
\frac{\alpha\lambda}{T^2}\ge 1,
\label{cond-1}
\end{equation}
implying $T^2/(\rho\alpha\lambda)\le 1$ since $\rho\ge 1$, we get
\begin{align}
\|{\mathcal L}_{1,\lambda}^+w\|_2^2\ge &
4\varepsilon a_0\rho^{1/2}\l{\mathcal I}_2(w)
-\Big [4\varepsilon^2\rho\l^3+4\l^3\|a\|_{\infty}^2 +\varepsilon C_8(\|a\|_{1,\infty},\|\psi\|_{3,\infty},\alpha,T)
\lambda\no\\
&\qquad\qquad\quad\qquad\;\;\;\;\;
+C_6(\|\psi\|_{\infty},\rho,\alpha,T)\lambda\Big ]
{\mathcal I}_1(w).
\label{stima-I3}
\end{align}

{\em Step 5: the final step.}
Under condition (\ref{cond-1}), from formula (\ref{split-1}) and estimates (\ref{stima-J1}) and (\ref{stima-I3}) we obtain
\begin{align*}
\|{\mathcal L}_{\lambda}w\|_2^2\ge&
\Big\{\left [4(a_0^2\alpha-\varepsilon^2)\rho-C_5(\|a\|_{1,\infty},\|\psi\|_{2,\infty},\alpha)-4\|a\|_{\infty}^2\right ]\lambda^3\\
&\;\,-C_4(\|a\|_\infty,\|{\rm div}\,b\|_{\infty},\alpha,T)\lambda^2
-C_1(\|a\|_{2,\infty},\|\psi\|_{3,\infty},\rho,\alpha,T)\lambda^{3/2}\\
&\;\,-C_3(\|{\rm div}\,b\|_{\infty},\rho,\alpha,T)\lambda-\varepsilon C_8(\|a\|_{1,\infty},\|\psi\|_{3,\infty},\alpha,T)\lambda\\
&\;\,-C_6(\|\psi\|_{\infty},\rho,\alpha,T)\lambda\Big\}
{\mathcal I}_1(w)
\no\\
&+\bigg\{2\left [2\varepsilon a_0\rho^{1/2}
-C_2(\|a\|_{1,\infty},\|\psi\|_{2,\infty},\alpha)\right ]\lambda\no\\
&\quad\;\;\;\;-C_1(\|a\|_{2,\infty},\|\psi\|_{3,\infty},\rho,\alpha,T)\lambda^{1/2}
-\frac{T^2}{4\alpha}\|a\|_{\infty}\|{\rm div}\,b\|_{\infty}\bigg\}{\mathcal I}_2(w).
\end{align*}
First we fix $\varepsilon^2=a_0^2\alpha/2$ and get
\begin{align*}
\|{\mathcal L}_{\lambda}w\|_2^2\ge&
\Big\{\left [2a_0^2\alpha\rho-C_5(\|a\|_{1,\infty},\|\psi\|_{2,\infty},\alpha)-4\|a\|_{\infty}^2\right ]\lambda^3\\
&\;\,-C_4(\|a\|_\infty,\|{\rm div}\,b\|_{\infty},\alpha,T)\lambda^2
-C_1(\|a\|_{2,\infty},\|\psi\|_{3,\infty},\rho,\alpha,T)\lambda^{3/2}\\
&\;\,
-C_3(\|{\rm div}\,b\|_{\infty},\rho,\alpha,T)\lambda-\frac{a_0\sqrt{\alpha}}{\sqrt{2}}C_8(\|a\|_{1,\infty},\|\psi\|_{3,\infty},\alpha,T)\lambda\\
&\;\,-C_6(\|\psi\|_{\infty},\rho,\alpha,T)
\lambda\Big\}{\mathcal I}_1(w)\no\\
&+\bigg \{2\left [\sqrt{2\alpha}\,a_0^2\rho^{1/2}- C_2(\|a\|_{1,\infty},\|\psi\|_{2,\infty},\alpha)\right ]\lambda\\
&\quad\;\;\;\;-C_1(\|a\|_{2,\infty},\|\psi\|_{3,\infty},\rho,\alpha,T)\lambda^{1/2}
-\frac{T^2}{4\alpha}\|a\|_{\infty}\|{\rm div}\,b\|_{\infty}\bigg\}{\mathcal I}_2(w).
\end{align*}
We now choose $\rho=\rho_0$ so as to satisfy the inequalities
\begin{align*}
\left\{
\begin{array}{l}
\rho\ge 1,\\[3mm]
2a_0^2\alpha\rho-C_5(\|a\|_{1,\infty},\|\psi\|_{2,\infty},\alpha)-4\|a\|_{\infty}^2\ge 1,\\[3mm]
\displaystyle
\sqrt{2\alpha}\,a_0^2\rho^{1/2}- C_2(\|a\|_{1,\infty},\|\psi\|_{2,\infty},\alpha)\ge 1.
\end{array}
\right.
\end{align*}
Corresponding to $\rho_0$ we determine $\l_0$ such that the following inequalities are satisfied for all $\l\ge\l_0$:
\begin{align*}
\left\{
\begin{array}{l}
\lambda^3-C_4(\|a\|_{\infty},\|{\rm div}\,b\|_{\infty},\alpha,T)\lambda^2
-C_1(\|a\|_{2,\infty},\|\psi\|_{3,\infty},\rho_0,\alpha,T)\lambda^{3/2}\\[2mm]
 \displaystyle-C_3(\|{\rm div}\,b\|_{\infty},\rho_0,\alpha,T)\lambda-\frac{a_0\sqrt{\alpha}}{\sqrt{2}} C_8(\|a\|_{1,\infty},\|\psi\|_{3,\infty},\alpha,T)\lambda
\\[2mm]
 \displaystyle-C_6(\|\psi\|_{\infty},\rho_0,\alpha,T)\lambda
 \ge \frac{1}{2}\lambda^3,
 \\[4mm]
\displaystyle 2\lambda-C_1(\|a\|_{2,\infty},\|\psi\|_{3,\infty},\rho_0,\alpha,T)\lambda^{1/2}-\frac{T^2}{4\alpha}\|a\|_{\infty}
\|{\rm div}\,b\|_{\infty}\ge \frac{1}{8}\lambda,
 \\[4mm]
\displaystyle  \l \ge \frac{T^2}{\a}.
\end{array}
\right.
\end{align*}
Consequently, for all $\l\ge\l_0$ we deduce the estimate
\begin{align*}
& \int_{Q_T}\left (\frac{1}{4}\l|\nabla_x \varphi_{\rho_0}| |\nabla_xw_{\rho_0}|^2
+\l^3|\nabla_x \varphi_{\rho_0}|^3w^2_{\rho_0}\right )\rmd t\rmd x\rmd y\le 2\|{\mathcal L}_{\lambda}w_{\rho_0}\|_2^2,
\end{align*}
where, from now on, we write the dependence of $\varphi$ and $w$ on $\rho_0$.

We can now come back to our original solution $v$ using formula (\ref{def-w}).
Observe that
\begin{align*}
&\int_{Q_T}|\nabla_x\varphi_{\rho_0}||\nabla_xw_{\rho_0}|^2\,\rmd t\rmd x\rmd y\\
\ge &\int_{Q_T}|\nabla_x\varphi_{\rho_0}||\nabla_xv|^2{\rm e}^{2\l \varphi_{\rho_0}}\,\rmd t\rmd x\rmd y
+\lambda^2\int_{Q_T}|\nabla_x\varphi_{\rho_0}|^3v^2{\rm e}^{2\l \varphi_{\rho_0}}\,\rmd t\rmd x\rmd y\\
&-\int_{Q_T}(|\nabla_x\varphi_{\rho_0}|^{1/2}|\nabla_xv|{\rm e}^{\l \varphi_{\rho_0}})(2\lambda|\nabla_x\varphi_{\rho_0}|^{3/2}|v|{\rm e}^{\l \varphi_{\rho_0}})\,\rmd t\rmd x\rmd y\\
\ge &\frac{3}{4}\int_{Q_T}|\nabla_x\varphi_{\rho_0}||\nabla_xv|^2{\rm e}^{2\l \varphi_{\rho_0}}\,\rmd t\rmd x\rmd y
-3\lambda^2\int_{Q_T}|\nabla_x\varphi_{\rho_0}|^3v^2{\rm e}^{2\l \varphi_{\rho_0}}\rmd t\rmd x\rmd y,
\end{align*}
where we used the inequality $|\gamma\delta|\le \gamma^2/4+\delta^2$ which holds for any $\gamma,\delta\in\R$.
Consequently, owing to (\ref{6.6}), we get
\begin{align*}
&\int_{Q_T} \left (\frac{3}{16}\lambda |\nabla_x\varphi_{\rho_0}||\nabla_xv|^2
+ \frac{1}{4}\l^3|\nabla_x\varphi_{\rho_0}|^3v^2\right ){\rm e}^{2\l \varphi_{\rho_0}}\,\rmd t\rmd x\rmd y
\no\\[2mm]
\le & 2\int_{Q_T} |{\mathcal P}_0v|^2\,{\rm e}^{2\l \varphi_{\rho_0}}\rmd t\rmd x\rmd y.
\end{align*}
The Carleman estimate (\ref{6.72}) now follows at once.
\end{proof}

\section{A continuous dependence result for the ill-posed problem \eqref{5.1}}
\label{sect-4}
\setcounter{equation}{0}
Introduce now the family of functions $\s_\ve\in W^{1,\infty}((0,T))$, $\ve\in (0,1/2)$, defined by
\begin{equation}
\s_\ve(t) =\left\{
\begin{array}{ll}
0,\q & t\in [0,\varepsilon T],
\\[3mm]
\displaystyle\frac{t-\varepsilon T}{\varepsilon T},\q & t\in (\varepsilon T,2\varepsilon T),
\\[3mm]
1,\q & t\in [2\varepsilon T,T].
\end{array}
\right.
\label{chi}
\end{equation}
Introduce also the function
$v_\ve = \s_\ve v$, where $v$ is the solution to problem \eqref{5.4}. It is a simple task to show that $v_\ve
\in H^1((0,T);L^2_{\C}(\Om\times{\mathcal O}))\cap L^2((0,T);{\mathcal H}^2_{\C}(\Om\times{\mathcal O}))$
solves the following
initial and boundary-value problem:
\begin{align*}
\left\{
\begin{array}{l}
D_tv_\ve(t,x,y)={\rm div}_x(a(x)\nabla_xv_\ve(t,x,y))+ c(x,y)\cdot \nabla_xv_\ve(t,x,y)+\sigma_{\ve}'(t)v(x,y)\\[1mm]
\qquad\;\,\qquad\qquad
+ b(y)\cdot \nabla_yv_\ve(t,x,y)+b_0(x,y)v_\ve(t,x,y) + {\widetilde g}_\ve(t,x,y),\\[1mm]
\qquad\qquad\qquad\qquad\qquad\qquad\qquad\qquad\;\;\;\quad\qquad\qquad (t,x,y)\in Q_T,
\\[1mm]
v_\ve(t,x,y)=0, \qquad\qquad\qquad\qquad\qquad\qquad\qquad\quad\;\; (t,x,y)\in [0,T]\times\partial_*(\Omega\times{\mathcal O}),
\\[1mm]
D_{\nu}v_{\ve}(t,x,y)=0,\qquad\qquad\qquad\qquad\qquad\qquad\qquad\; (t,x,y)\in [0,T]\times\Gamma\times {\mathcal O},
\\[1mm]
v_\ve(0,x,y)=0, \qquad\qquad\qquad\qquad\qquad\qquad\qquad\quad\;\; (x,y)\in \Omega\times{\mathcal O},
\end{array}
\right.
\end{align*}
where $\widetilde g_{\ve}=\sigma_{\ve}\widetilde g$.
Multiplying the differential equation by $2\overline{v_{\ve}}$ and integrating once by parts over $\Om\times {\mathcal O}$ we obtain the identity
\begin{align*}
&\int_{{\Om\times {\mathcal O}}}D_tv_{\ve}(t,x,y)\overline{v_{\ve}(t,x,y)}\rmd x\rmd y
+\int_{{\Om\times {\mathcal O}}}a(x)|\nabla_xv_\ve(t,x,y)|^2\rmd x\rmd y
\no\\
=&\int_{{\Om\times {\mathcal O}}}\overline{v_\ve(t,x,y)}(c(x,y)\cdot \nabla_xv_\ve(t,x,y))\rmd x\rmd y\no\\
&+\int_{{\Om\times {\mathcal O}}} \overline{v_{\ve}(t,x,y)}(b(y)\cdot\nabla_yv_\ve(t,x,y))\rmd x\rmd y
\no\\
&+\int_{{\Om\times {\mathcal O}}} b_0(x)|v_\ve(t,x,y)|^2\rmd x\rmd y
+\int_{{\Om\times {\mathcal O}}}{\widetilde g}_\ve(t,x,y)\overline{v_\ve(t,x,y)}\,\rmd x\rmd y
\no\\
&+\int_{{\Om\times {\mathcal O}}}\s'_\ve(t)v(t,x,y)\overline{v_\ve(t,x,y)}\,\rmd x\rmd y,
\end{align*}
for any $t\in (0,T)$.
Taking the real part of both the sides of the previous equality and observing that
\begin{align*}
{\rm Re}\left (\int_{{\Om\times {\mathcal O}}}D_tv_{\ve}(t,x,y)\overline{v_{\ve}(t,x,y)}\rmd x\rmd y\right )=
\frac{1}{2}D_t\int_{{\Om\times {\mathcal O}}}|v_{\ve}(t,x,y)|^2\rmd x\rmd y
\end{align*}
and
\begin{align*}
&{\rm Re}\left (\int_{{\Om\times {\mathcal O}}} \overline{v_{\ve}(t,x,y)}(b(y)\cdot\nabla_yv_\ve(t,x,y))\rmd x\rmd y\right )\\
= &\int_{{\Om\times {\mathcal O}}}b(y)\cdot{\rm Re}(\overline{v_{\ve}(t,x,y)}\nabla_y v_{\ve}(t,x,y))\rmd x\rmd y\\
= & \frac{1}{2}\int_{{\Om\times {\mathcal O}}}b(y)\cdot\nabla_y|v_{\ve}(t,x,y)|^2 \rmd x\rmd y\\
=&-\frac{1}{2} \int_{{\Om\times {\mathcal O}}}({\rm div}\,b(y))|v_{\ve}(t,x,y)|^2 \rmd x\rmd y,
\end{align*}
we get
\begin{align*}
&D_t\|v_\ve(t,\cdot,\cdot)\|^2_{L^2({\Om\times {\mathcal O}})}
+ 2\int_{{\Om\times {\mathcal O}}}a(x)|\nabla_xv_\ve(t,x,y)|^2\rmd x\rmd y
\no\\
=&2\int_{{\Om\times {\mathcal O}}}\overline{v_\ve(t,x,y)}(c(x,y)\cdot \nabla_xv_\ve(t,x,y))\rmd x\rmd y\no\\
&-\int_{{\Om\times {\mathcal O}}}({\rm div}\,b(y))|v_{\ve}(t,x,y)|^2\rmd x\rmd y
\no\\
&+ 2\int_{{\Om\times {\mathcal O}}} b_0(x)|v_\ve(t,x,y)|^2\rmd x\rmd y
+ 2\int_{{\Om\times {\mathcal O}}}{\widetilde g}_\ve(t,x,y)\overline{v_\ve(t,x,y)}\,\rmd x\rmd y
\no\\
&+ 2\int_{{\Om\times {\mathcal O}}}\s'_\ve(t)v(t,x,y)\overline{v_\ve(t,x,y)}\,\rmd x\rmd y,
\end{align*}
for any $t\in (0,T)$. Therefore, using the elementary inequality
\begin{align*}
2|\overline{v_\ve} (c\cdot \nabla_xv_\ve)|\le 2\|c\|_{\infty}|v_\ve||\nabla_xv_\ve|
\le a_0^{-1}\|c\|^2_{\infty}|v_\ve|^2 + a_0|\nabla_xv_\ve|^2,
\end{align*}
$a_0$ being the positive constant in Hypothesis \ref{hyp-1}(i), we can estimate
\begin{align}
&D_t\|v_\ve(t,\cdot,\cdot)\|^2_{L^2_{\C}({\Om\times {\mathcal O}})}
+a_0\|\nabla_xv_\ve(t,\cdot,\cdot)\|_{L^2_{\C}(\Omega\times{\mathcal O})}^2
\no\\
\le &\left (\|{\rm div}\,b\|_{\infty}+a_0^{-1}\|c\|_{\infty}^2+ 2\|b_0\|_{\infty}\right )\|v_\ve(t,\cdot,\cdot)\|_{L^2_{\C}(\Omega\times{\mathcal O})}^2\no\\
&+ 2\|{\widetilde g}_\ve(t,\cdot,\cdot)\|_{L^2_{\C}(\Omega\times{\mathcal O})}\|v_\ve(t,\cdot,\cdot)\|_{L^2_{\C}(\Omega\times\mathcal{O})}+ 2|\s'_\ve(t)|\|v(t,\cdot,\cdot)\|_{L^2_{\C}(\Omega\times {\mathcal O})}^2.
\label{cont-dep}
\end{align}

We now fix $\tau\in (0,T]$ and integrate \eqref{cont-dep} with respect to $t$ over $(0,\tau)$. Taking \eqref{chi} into account, we obtain
\begin{align}
z_\ve(\tau):=&\|v_\ve(\tau,\cdot,\cdot)\|^2_{L^2_{\C}({\Om\times {\mathcal O}})}
+ a_0\|\nabla_xv_\ve\|^2_{L^2_{\C}(Q_\tau)}\no\\
\le &(\|{\rm div}\,b\|_{\infty} + a_0^{-1}\|c\|^2_{\infty} + 2\|b_0\|_{\infty})
\int_0^\tau \|v_\ve(t,\cdot,\cdot)\|^2_{L^2_{\C}({\Om\times {\mathcal O}})}\,\rmd t
\no\\
&+ 2\int_0^\tau \|{\widetilde g}_\ve(t,\cdot,\cdot)\|_{L^2_{\C}({\Om\times {\mathcal O}})}
\|v_\ve(t,\cdot,\cdot)\|_{L^2_{\C}({\Om\times {\mathcal O}})}\,\rmd t\no\\
&+ \frac{2}{\ve T}\int_{\ve T}^{2\ve T} \|v(t,\cdot,\cdot)\|^2_{L^2_{\C}({\Om\times {\mathcal O}})}\,\rmd t.
\label{7.5}
\end{align}
The Carleman estimate \eqref{6.72} yields the inequality
\begin{equation}
\label{7.7}
\int_{\ve T}^{2\ve T} \|v(t,\cdot,\cdot)\|^2_{L^2_{\C}({\Om\times {\mathcal O}})}\,\rmd t
\le M_1(\ve,T)\|{\widetilde g}\|^2_{L^2_{\C}(Q_T)},
\end{equation}
where the constant $M_1(\ve,T)$ depends also on $a_0$, $\|a\|_{2,\infty}$, $\|b_0\|_{\infty}$, $\|{\rm div}\,b\|_{\infty}$,
$\|c\|_{\infty}$, $\|\psi\|_{3,\infty}$ and $\alpha$.

From \eqref{7.5} and \eqref{7.7} we obtain the following integral inequality for function $z_\ve$:
\begin{align}
z_\ve(\tau) \le \b \int_0^\tau z_\ve(t)\,\rmd t
+ 2\int_0^\tau \|{\widetilde g}(t,\cdot,\cdot)\|_{L^2_{\C}({\Om\times {\mathcal O}})}z_\ve(t)^{1/2}\,\rmd t
+M_2(\ve,T)\|{\widetilde g}\|^2_{L^2_{\C}(Q_T)},
\label{7.8}
\end{align}
for any $\tau\in (0,T]$, where
$\b = \|{\rm div}\,b\|_{\infty}+ 2a_0^{-1}\|c\|^2_{\infty}+ 2\|b_0\|_{\infty}$.
Then, we need \cite[Theorem 4.9]{BS}, with $p=1/2$, which we report here as a lemma.

\begin{lemma} Let $z:[0,T]\to\R$ be a nonnegative continuous function and let $b,k\in L^1((0,T))$ be nonnegative functions satisfying
\begin{align*}
z(t) \le \gamma + \int_0^t b(s)z(s)\,\rmd s + \int_0^t k(s)z(s)^p\,\rmd s,\qq t\in [0,T],
\end{align*}
where $p\in (0,1)$ and $\gamma\ge 0$ are given constants. Then, for all $t\in [0,T]$
\begin{align*}
z(t) \le\exp \left( \int_0^t b(s)\,\rmd s\right) \left[\gamma^{1-p}
+ (1-p)\int_0^t k(s)\exp \left( (p-1)\int _{0}^{s}b(\s)\,d\s \right) \,\rmd s\right] ^{\frac{1}{1-p}}.
\end{align*}
\end{lemma}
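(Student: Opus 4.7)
The plan is to reduce the integral inequality to a differential inequality for a majorant, solve it explicitly via a Bernoulli-type substitution, and handle the vanishing-$\gamma$ case by approximation.

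First, I would assume $\gamma > 0$ and introduce the majorant
\begin{equation*}
Z(t) = \gamma + \int_0^t b(s) z(s)\,\rmd s + \int_0^t k(s) z(s)^p\,\rmd s,\qquad t\in[0,T].
\end{equation*}
By construction, $z(t)\le Z(t)$ on $[0,T]$, the function $Z$ is absolutely continuous, strictly positive (since $\gamma>0$ and the other terms are nonnegative), and nondecreasing. Since $z\le Z$ and $Z^p$ is monotone in $Z\ge 0$, almost every $t$ satisfies
\begin{equation*}
Z'(t)= b(t) z(t) + k(t) z(t)^p \le b(t) Z(t) + k(t) Z(t)^p.
\end{equation*}

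Next, I would remove the linear term via an integrating factor. Setting $B(t)=\int_0^t b(s)\,\rmd s$ and $W(t)=Z(t)\,\rmd e^{-B(t)}$, a direct computation gives
\begin{equation*}
W'(t) = \bigl(Z'(t)-b(t)Z(t)\bigr)e^{-B(t)} \le k(t)\, Z(t)^p\, e^{-B(t)} = k(t)\, e^{(p-1)B(t)}\, W(t)^p,
\end{equation*}
almost everywhere. Because $W>0$, I may divide by $W^p$ and recognize the left-hand side as the derivative of $W^{1-p}/(1-p)$; integrating on $[0,t]$ yields
\begin{equation*}
W(t)^{1-p} \le W(0)^{1-p} + (1-p)\int_0^t k(s)\, e^{(p-1)B(s)}\,\rmd s = \gamma^{1-p} + (1-p)\int_0^t k(s)\, e^{(p-1)B(s)}\,\rmd s.
\end{equation*}
Raising to the power $1/(1-p)$ (which preserves the inequality, since $1-p>0$) and multiplying through by $e^{B(t)}=e^{(1-p)B(t)/(1-p)}$ to recover $Z=W e^{B}$, I obtain the stated bound for $Z(t)$, hence for $z(t)\le Z(t)$.

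Finally, I would address the case $\gamma = 0$. Applying the previous step with $\gamma$ replaced by $\eta>0$ gives
\begin{equation*}
z(t)\le e^{B(t)}\!\left[\eta^{1-p}+(1-p)\int_0^t k(s) e^{(p-1)B(s)}\,\rmd s\right]^{\!1/(1-p)},
\end{equation*}
uniformly in $\eta>0$. Letting $\eta\to 0^+$ yields the desired estimate with $\gamma=0$ by continuity of $\eta\mapsto \eta^{1-p}$. The only delicate point, which I consider the main obstacle, is the step where one divides by $W^p$: this is why positivity of $W$, ensured by taking $\gamma>0$, is essential, and why the final argument must go through an approximation when $\gamma=0$. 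Once this is handled, the rest is a routine explicit integration.
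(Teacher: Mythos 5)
Your argument is correct. Note that the paper itself offers no proof of this lemma: it is quoted verbatim from the reference [BS, Theorem~4.9] (Bainov--Simeonov) and used as a black box with $p=1/2$, so there is nothing in the text to compare against; what you have written is the standard self-contained Bihari-type proof. The key steps all hold: the majorant $Z$ is absolutely continuous because $bz$ and $kz^p$ lie in $L^1(0,T)$ ($z$ being continuous and bounded), the monotonicity of $s\mapsto s^p$ lets you pass from $z\le Z$ to the differential inequality, the substitution $W(t)=Z(t)e^{-B(t)}$ kills the linear term, and since $W\ge \gamma e^{-B(T)}>0$ the function $W^{1-p}$ is itself absolutely continuous (composition of a Lipschitz map on a compact interval away from zero with an AC function), so the integration of $(W^{1-p})'\le (1-p)k e^{(p-1)B}$ is legitimate; the limiting argument $\eta\to 0^+$ disposes of $\gamma=0$ cleanly. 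Two cosmetic remarks: the definition of $W$ contains a stray differential symbol (it should read $W(t)=Z(t)e^{-B(t)}$), and it is worth stating explicitly, as you implicitly use, that $W(0)=Z(0)=\gamma$ when evaluating the integrated inequality at $t=0$.
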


From this lemma and \eqref{7.8} we deduce the fundamental estimate holding true for all $\tau \in [0,T]$:
\begin{align*}
z_{\varepsilon}(\tau)=&\|v_\ve(\tau,\cdot,\cdot)\|^2_{L^2_{\C}({\Om\times {\mathcal O}})}
+ a_0\|\nabla_xv_\ve\|^2_{L^2_{\C}(Q_\tau)}
\no\\
\le& \left (M_2(\ve,T)^{1/2}\|{\widetilde g}\|_{L^2_{\C}(Q_T)}{\rm e}^{\b \tau/2}
+ \int_0^\tau {\rm e}^{\b(\tau-s)/2}\|{\widetilde g}_\ve(s,\cdot,\cdot)\|_{L^2_{\C}({\Om\times {\mathcal O}})}\,\rmd s\right )^2\no\\
\le &2M_2(\ve,T)\|{\widetilde g}\|^2_{L^2_{\C}(Q_T)}{\rm e}^{\b \tau}
+ 2\left (\int_0^\tau {\rm e}^{\b(\tau-s)/2}\|{\widetilde g}(s,\cdot,\cdot)\|_{L^2_{\C}({\Om\times {\mathcal O}})}\,\rmd s\right )^2
\no\\
\le &2\big(M_2(\ve,T)+\b^{-1}\big )\|{\widetilde g}\|^2_{L^2_{\C}(Q_T)}{\rm e}^{\b \tau},
\end{align*}
for any $\tau\in [0,T]$.
In particular, for all $\tau\in [2\ve T,T]$ we find the following estimate for $v$, where we have set
$Q(2\ve T,\tau)=(2\ve T,\tau)\times {\Om\times {\mathcal O}}$:
\begin{align}
\|v(\tau,\cdot,\cdot)\|^2_{L^2_{\C}({\Om\times {\mathcal O}})}
+ a_0\|\nabla_xv\|^2_{L^2_{\C}(Q(2\ve T,\tau))}
\le 2\big[M_2(\ve,T)+\b^{-1}\big]\|{\widetilde g}\|^2_{L^2_{\C}(Q_T)}{\rm e}^{\b \tau},
\label{7.11}
\end{align}
for any $\tau\in [0,T]$.
Recalling that the solution $u$ to problem \eqref{5.1} is related to $v$ by the formula $u=v+h$, from \eqref{7.11} we immediately deduce the estimate for $u$:
\begin{align}
&\|u(\tau,\cdot,\cdot)\|^2_{L^2_{\C}({\Om\times {\mathcal O}})}
+ a_0\|\nabla_xu\|^2_{L^2_{\C}(Q(2\ve T,\tau))}\no\\
\le & 2\|h(\tau,\cdot,\cdot)\|^2_{L^2_{\C}({\Om\times {\mathcal O}})}
+ 2a_0\|\nabla_xh\|^2_{L^2_{\C}(Q(2\ve T,\tau))}
+ 4\big[M_2(\ve,T)+\b^{-1}\big]\|{\widetilde g}\|^2_{L^2_{\C}(Q_T)}{\rm e}^{\b \tau},
\label{7.12}
\end{align}
for any $\tau\in [2\ve T,T]$.
Now, taking advantage of definition \eqref{5.5}, we can estimate
\begin{align}
& \|{\widetilde g}\|^2_{L^2_{\C}(Q_T)}\le 6\|g\|^2_{L^2_{\C}(Q_T)}+6\|D_th\|^2_{L^2_{\C}(Q_T)}
+6\|{\rm div}_x(a\nabla_xh)\|_{L^2_{\C}(Q_T)}^2
\no\\[1mm]
& \phantom{\|{\widetilde g}\|^2_{L^2_{\C}(Q_T)}\le}
 + 6\|c\|^2_\infty\|\nabla_xh\|^2_{L^2_{\C}(Q_T)}+6\|b\cdot\nabla_yh\|^2_{L^2_{\C}(Q_T)}
+6\|b_0\|^2_\infty\|h\|^2_{L^2_{\C}(Q_T)}.
\label{7.13}
\end{align}
Finally, \eqref{7.12} and \eqref{7.13} yield the following continuous dependence estimate
for all $\tau\in [2\ve T,T]$:
\begin{align}
& \|u(\tau,\cdot,\cdot)\|^2_{L^2_{\C}(\Om\times {\mathcal O})}
+ a_0\|\nabla_xu\|^2_{L^2_{\C}(Q(2\ve T,\tau))}\no\\
\le &M_3(\ve,T)\Big\{\|g\|^2_{L^2_{\C}(Q_T)}
+\|h\|^2_{H^1((0,T);L^2_{\C}(\Om\times {\mathcal O}))}
+\|{\rm div}_x(a\nabla_xh)\|_{L^2_{\C}(Q_T)}^2\no\\
&\qquad\qquad\;\;+\|b\cdot\nabla_yh\|^2_{L^2_{\C}(Q_T)} + \|\nabla_xh\|^2_{L^2_{\C}(Q_T)}\Big\},
\label{7.14}
\end{align}
where the positive constant $M_3$ depends also on $a_0$, $\|a\|_{2,\infty}$, $\|b_0\|_{\infty}$, $\|{\rm div}\,b\|_{\infty}$,
$\|c\|_\infty$, $\|\psi\|_{3,\infty}$ and $\alpha$.

We have so proved the following continuous dependence result:

\begin{theorem}
\label{thm-4.9}
Under Hypotheses $\ref{hyp-1}$ the solution $u$ to problem $\eqref{5.1}$ satisfies the continuous dependence estimate $\eqref{7.14}$.
\end{theorem}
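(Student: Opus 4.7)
The plan is to derive the continuous dependence estimate via a standard energy estimate combined with the Carleman inequality from Theorem \ref{thm-main1}, introducing a temporal cutoff to compensate for the absence of an initial condition. First I would reduce to the homogeneous boundary value problem \eqref{5.4} by the translation $v=u-h$, which transfers all the data into the forcing term $\widetilde g$ given by \eqref{5.5}. This eliminates the boundary information at the price of the bound \eqref{7.13} for $\|\widetilde g\|_{L^2_{\C}(Q_T)}^2$ in terms of $g$ and $h$, which at the very end will be plugged into the energy estimate for $v$ to produce the claimed bound \eqref{7.14} for $u$.

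Next I would multiply $v$ by the smooth temporal cutoff $\sigma_\varepsilon$ defined in \eqref{chi}. The auxiliary function $v_\varepsilon=\sigma_\varepsilon v$ vanishes identically on $[0,\varepsilon T]$, so it satisfies a genuine initial-boundary value problem, with the same differential operator, vanishing initial datum, vanishing Dirichlet and Neumann traces, and a perturbed right-hand side $\widetilde g_\varepsilon+\sigma_\varepsilon' v$. A standard energy computation — testing the equation against $2\overline{v_\varepsilon}$, integrating by parts in $\Omega\times{\mathcal O}$, taking real parts, and using that $\mathrm{Re}(\overline{v_\varepsilon}\,b\cdot\nabla_y v_\varepsilon)=\tfrac12 b\cdot\nabla_y|v_\varepsilon|^2$ together with Hypothesis \ref{hyp-1}(iii) — yields the differential inequality \eqref{cont-dep}. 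The first-order term in $c$ is absorbed via the elementary Cauchy-Schwarz splitting $2\|c\|_\infty|v_\varepsilon||\nabla_xv_\varepsilon|\le a_0^{-1}\|c\|_\infty^2|v_\varepsilon|^2+a_0|\nabla_xv_\varepsilon|^2$, which uses the positivity assumption $a\ge a_0$ and is what allows the gradient term to survive on the left-hand side.

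Integrating \eqref{cont-dep} over $(0,\tau)$ produces the integral inequality \eqref{7.5} for the energy quantity $z_\varepsilon(\tau)=\|v_\varepsilon(\tau,\cdot,\cdot)\|_{L^2_\C}^2+a_0\|\nabla_xv_\varepsilon\|_{L^2_\C(Q_\tau)}^2$. Here is where the only nontrivial input appears: the forcing term $\sigma_\varepsilon'v$, whose support lies in $(\varepsilon T,2\varepsilon T)$, contributes an $L^2$-norm of $v$ on that strip which cannot be controlled by $v_\varepsilon$ itself. This is precisely the place where the Carleman estimate \eqref{6.72} takes over: since $\varphi_{\rho_0}$ is bounded above on $[\varepsilon T,2\varepsilon T]\times\Omega$, the weight $e^{2\lambda\varphi_{\rho_0}}$ is bounded from below there, so \eqref{6.72} applied to $v$ with right-hand side $\widetilde g$ yields the key bound \eqref{7.7}, with a finite (though $\varepsilon$-dependent) constant $M_1(\varepsilon,T)$. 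This is the main technical obstacle, and the whole construction of the weighted estimate in Section \ref{sect-3} was designed exactly to make this step work in the present degenerate, unbounded setting.

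Combining \eqref{7.5} and \eqref{7.7}, I obtain the Bihari-type inequality \eqref{7.8} with $p=1/2$. Applying the stated generalised Gronwall lemma then bounds $z_\varepsilon(\tau)$ by a multiple of $\|\widetilde g\|_{L^2_\C(Q_T)}^2 e^{\beta\tau}$. Restricting to $\tau\in[2\varepsilon T,T]$, where $v_\varepsilon\equiv v$, produces \eqref{7.11}. Finally, I recover $u$ from $v=u-h$ via the triangle inequality, which gives \eqref{7.12}, and then insert the control \eqref{7.13} of $\|\widetilde g\|_{L^2_\C(Q_T)}^2$ in terms of $g$ and $h$. Collecting constants yields \eqref{7.14} with an appropriate $M_3(\varepsilon,T)$, completing the proof of Theorem \ref{thm-4.9}.
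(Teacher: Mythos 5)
Your proposal follows essentially the same route as the paper: translation $v=u-h$, the temporal cutoff $\sigma_\varepsilon$ producing the auxiliary problem for $v_\varepsilon=\sigma_\varepsilon v$, the energy identity leading to \eqref{cont-dep} and \eqref{7.5}, the Carleman estimate \eqref{6.72} to control the strip term as in \eqref{7.7}, the Bihari-type inequality \eqref{7.8} resolved by the cited Gronwall-type lemma, and finally the return to $u$ via \eqref{7.12} and \eqref{7.13}. The argument is correct as proposed (note only that the lower bound for the weight ${\rm e}^{2\lambda\varphi_{\rho_0}}$ on $[\varepsilon T,2\varepsilon T]\times\Omega\times{\mathcal O}$ comes from $\varphi_{\rho_0}$ being bounded \emph{below} there, while its global boundedness above controls the right-hand side), so no changes are needed.
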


\section{Some extensions of our main results}
\label{sect-5}

In this section we show that the validity of Theorem \ref{thm-4.9} can be extended both to some classes of
degenerate integrodifferential boundary problems and to some classes of semilinear problems.

\subsection{A degenerate convolution integrodifferential problem}
\label{subsect-5.1}
\setcounter{equation}{0}
Here we consider a
convolution integrodifferential problem with no initial conditions, and with Cauchy data on the lateral boundary of the cylinder $\Omega\times \R^N$. We still assume that $\Omega$ is a bounded subset of $\R^M$ with a boundary of class $C^3$.

Let $\widetilde {\mathcal A}$ be the following degenerate integrodifferential linear operator
\begin{align*}
\widetilde {\mathcal A}z(x,y) =&  {\rm div}_x(a(x)\nabla_xz(x,y))
+By\cdot \nabla_yz(x,y) + \b_1(x)\cdot \nabla_xz(x,y)
\no\\
& +\b_0(x)z(x,y) + \int_{\R^N} k_1(x,y-\eta)\cdot \nabla_xz(x,\eta)\,d\eta\no\\
&+ \int_{\R^N} k_0(x,y-\eta)z(x,\eta)\,d\eta.
\end{align*}
Consider the parabolic integrodifferential problem with no initial condition, but with Cauchy data on the boundary
\begin{equation}
\left\{
\begin{array}{ll}
D_tz(t,x,y)=\widetilde {\mathcal A}z(t,x,y)+f(t,x,y),\q  & (t,x,y)\in (0,T)\times \Om\times \R^N,\\[1mm]
z(t,x,y)=h(t,x,y), & (t,x,y)\in [0,T]\times\partial\Omega\times \R^N,\\[1mm]
D_{\nu}z(t,x,y)=D_{\nu}h(t,x,y),& (t,x,y)\in [0,T]\times\Gamma\times\R^N,
\end{array}
\right.
\label{8.3}
\end{equation}
where $\Gamma$ is an open subset of $\partial\Omega$ and
\begin{hyp}
\label{hyp-2}
\begin{enumerate}[\rm (i)]
The following conditions are satisfied:
\item
$a\in W^{2,\infty}(\Om)$ and there exists a positive constant $a_0$ such that $|a(x)|\ge a_0$ for any $x\in\Omega$;
\item
$B$ is a real $(N\times N)$-square matrix;
\item
$\b_0\in L^{\infty}(\Om)$;
\item
$\b_1\in (L^{\infty}(\Om))^M$;
\item
$k_0\in L^\infty(\Om;L^1(\R^N))$;
\item
$k_1\in L^\infty(\Om;L^1(\R^N))^M$;
\item
$f\in L^2(Q_T)$;
\item
$h\in H^1((0,T);L^2(\Om\times {\mathcal O}))\cap L^2((0,T);{\mathcal H}^2(\Om\times {\mathcal O}))$.
\end{enumerate}
\end{hyp}

Denote by ${\mathcal F}_y$ the Fourier transform with respect to the variable $y$.
As it is easily seen, function $u={\mathcal F}_yz$ solves the ill-posed problem
\begin{align*}
\left\{
\begin{array}{l}
D_tu(t,x,\eta)- {\rm div}_x(a(x)\nabla_xu(t,x,\eta))
+c(x,\eta)\cdot \nabla_xu(t,x,\eta)\\[1mm]
+ B^T\eta\cdot \nabla_\eta u(t,x,\eta)+b_0(x,\eta)u(t,x,\eta)=({\mathcal F}_yf)(t,x,\eta),\\[1mm]
\qquad\qquad\qquad\qquad\qquad\qquad\qquad\qquad\qquad\qquad\qquad\;\;\;\; (t,x,\eta)\in (0,T)\times \Om\times \R^N,\\[1mm]
u(t,x,\eta)=({\mathcal F}_yh)(t,x,\eta),\qquad\qquad\qquad\qquad\qquad\quad\quad (t,x,\eta)\in [0,T]\times\partial\Omega\times \R^N,\\[1mm]
D_{\nu}u(t,x,\eta)=({\mathcal F}_yD_{\nu}h)(t,x,\eta),\qquad\qquad\qquad\qquad\quad\;\;\! (t,x,\eta)\in [0,T]\times\Gamma\times\R^N,
\end{array}
\right.
\end{align*}
where $c:\Omega\times \R^N\to {\mathbb C}^M$ and $\b_0:\Omega\times \R^N\to {\mathbb C}$ are defined by
\begin{align*}
c(x,\eta)=-\b_1(x)-({\mathcal F}_yk_1)(x,\eta),\qquad\;\, b_0(x,\eta)={\rm Tr}(B)-\b_0(x)-({\mathcal F}_yk_0)(x,\eta),
\end{align*}
for any $x\in\Omega$ and $\eta\in\R^N$. By Theorem \ref{thm-4.9}, $u$ satisfies the continuous dependence estimate
\begin{align}
&\|u(\tau,\cdot,\cdot)\|^2_{L^2(\Om\times {\mathcal O})}
+ a_0\|\nabla_xu\|^2_{L^2(Q(2\ve T,\tau))}\no\\
\le &M(\ve,T)\Big\{\|{\mathcal F}_yf\|^2_{L^2(Q_T)}
+\|{\mathcal F}_yh\|^2_{H^1((0,T);L^2(\Om\times {\mathcal O}))}
+\|{\rm div}_x(a\nabla_x{\mathcal F}_yh)\|_{L^2(Q_T)}^2\no\\
&\qquad\qquad\;\;+\|B^T\eta\cdot\nabla_{\eta}{\mathcal F}_yh\|^2_{L^2(Q_T)} + \|\nabla_x{\mathcal F}_yh\|^2_{L^2(Q_T)}\Big\},
\label{7.14-bis}
\end{align}
for all $\varepsilon\in (0,1/4)$,
$\tau\in [2\ve T,T]$ and
some positive constant $M(\varepsilon,T)$, depending also on $a_0$, $\|a\|_{2,\infty}$, $\|B\|$, $\|\beta_0\|_{\infty}$, $\|\beta_1\|_{\infty}$, $\|\psi\|_{3,\infty}$, $\|k_0\|_{L^{\infty}(\Omega;L^1(\R^N))}$, $\|k_1\|_{L^{\infty}(\Omega;L^1(\R^N))^M}$ and $\a$.

Using the Parseval identity and observing that $\nabla_x$ commutes with ${\mathcal F}_y$ and that $\varphi_{\rho_0}$ is {\it independent} of $\eta$,
from \eqref{7.14-bis} we deduce that $z$ satisfies
\begin{align}
&\|z(\tau,\cdot,\cdot)\|^2_{L^2({\Om\times {\mathcal O}})}
+ a_0\|\nabla_xz\|^2_{L^2(Q(2\ve T,\tau))}\no\\
\le & M(\ve,T)\Big\{\|f\|^2_{L^2(Q_T)}
+\|h\|^2_{H^1((0,T);L^2(\Om\times {\mathcal O}))}
+\|{\rm div}_x(a\nabla_xh)\|_{L^2(Q_T)}^2\no\\
&\qquad\qquad\;\,
+\|By\cdot\nabla_yh\|^2_{L^2(Q_T)} + \|\nabla_xh\|^2_{L^2(Q_T)}\Big\},
\label{8.15}
\end{align}
for all $\varepsilon\in (0,1/4)$ and $\tau\in [2\ve T,T]$.

We have proved the following continuous dependence result:

\begin{theorem}
Let Hypotheses $\ref{hyp-2}$ be satisfied. Then,
the solution $z$ to problem $\eqref{8.3}$ satisfies the continuous dependence estimate $\eqref{8.15}$. In particular, if $(\b_0,\b_1,f,h)=(0,0,0,0)$, then $z=0$
in $Q_T$, i.e., the unique continuation property holds true.
\end{theorem}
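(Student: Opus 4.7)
The plan is to reduce problem \eqref{8.3} to a problem of the form \eqref{5.1} by means of a partial Fourier transform in the variable $y$, and then to invoke Theorem \ref{thm-4.9} in its complex-valued form. I would proceed in three main steps.

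First, setting $u(t,x,\eta):=(\mathcal{F}_y z)(t,x,\eta)$, I would derive the equation satisfied by $u$. Using the standard identities $\mathcal{F}_y(\partial_{y_j} z) = i\eta_j u$ and $\mathcal{F}_y(y_j z) = i\partial_{\eta_j} u$, a short computation gives $\mathcal{F}_y(By\cdot\nabla_y z) = -B^T\eta\cdot\nabla_\eta u - {\rm Tr}(B)\,u$, and the convolution theorem turns each integral operator with kernel $k_j(x,y-\eta)$ into multiplication by $(\mathcal{F}_y k_j)(x,\eta)$. This yields exactly the transformed problem stated in the excerpt, with coefficients $c(x,\eta)$, $b_0(x,\eta)$ as declared and drift $b(\eta)=B^T\eta$ in the $\eta$-direction. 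I would then verify that Hypothesis \ref{hyp-1} is satisfied: $c$ and $b_0$ lie in $L^\infty_{\C}$ because $\mathcal{F}_y k_j\in L^\infty$ under $k_j\in L^\infty(\Omega;L^1(\R^N))$, while the drift $b(\eta)=B^T\eta$ lies in $(W^{1,\infty}_{\rm loc}(\R^N))^N$ with ${\rm div}\,b={\rm Tr}(B^T)\in L^\infty(\R^N)$.

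Next, an application of Theorem \ref{thm-4.9} to $u$ produces \eqref{7.14-bis}. To pass from this to \eqref{8.15}, I would use Plancherel's identity in $\eta$: since $\nabla_x$ and multiplication by $a(x)$ commute with $\mathcal{F}_y$, and since the weight $\varphi_{\rho_0}$ depends only on $(t,x)$, each $L^2$-norm appearing in \eqref{7.14-bis} matches, up to a harmless constant, the corresponding norm of the untransformed function. In particular, $\|B^T\eta\cdot\nabla_\eta \mathcal{F}_y h\|_{L^2}$ equals $\|By\cdot\nabla_y h\|_{L^2}$ via the same Fourier identity used above. Estimate \eqref{8.15} follows at once, and the unique continuation assertion is immediate: choosing $f=h=0$ makes the right-hand side of \eqref{8.15} vanish, so $z(\tau,\cdot,\cdot)=0$ for every $\tau\in[2\varepsilon T,T]$ and every $\varepsilon\in(0,1/4)$, and letting $\varepsilon\to 0^+$ gives $z\equiv 0$ in $Q_T$.

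The main obstacle is bookkeeping rather than analysis: one must verify that $b(\eta)=B^T\eta$ is a legitimate drift for the Carleman framework of Section \ref{sect-3} even though it is unbounded (it is, because Hypothesis \ref{hyp-1}(iii) only demands that ${\rm div}\,b$ be bounded), and that the Fourier transform of each convolution kernel genuinely produces a bounded multiplier under the sole assumption $k_j\in L^\infty(\Omega;L^1(\R^N))$. Once these points are checked, the reduction to Theorem \ref{thm-4.9} is essentially automatic.
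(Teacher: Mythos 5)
Your proposal is correct and follows essentially the same route as the paper: Fourier transform in $y$ (turning $By\cdot\nabla_y$ into $-B^T\eta\cdot\nabla_\eta-{\rm Tr}(B)$ and the convolutions into bounded multipliers, so that Hypothesis \ref{hyp-1} applies with the unbounded drift $B^T\eta$ admissible since only ${\rm div}\,b\in L^\infty$ is required), then Theorem \ref{thm-4.9} for the transformed problem and Plancherel's identity, using that $\nabla_x$ commutes with $\mathcal{F}_y$ and $\varphi_{\rho_0}$ is independent of $\eta$, to obtain \eqref{8.15}. Your concluding limit $\varepsilon\to 0^+$ for the unique continuation statement is a correct and slightly more explicit finish than the paper's.
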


\subsection{A semilinear parabolic equation}
\label{subsect-5.2}

We now consider the following semilinear boundary value problem
\begin{equation}
\left\{
\begin{array}{l}
D_tu(t,x,y)={\rm div}_x(a(x)\nabla_xu(t,x,y)) + b(y)\cdot\nabla_yu(t,x,y)
\\[1mm]
\qquad\qquad\qquad\, + q(u(t,x,y),\nabla_xu(t,x,y)) + g(t,x,y),\\[1mm]
\qquad\qquad\qquad\qquad\qquad\qquad\qquad\qquad\qquad\;\;\; (t,x,y)\in [0,T]\times\Omega\times{\mathcal O}=:Q_T,
\\[1mm]
u(t,x,y)=h(t,x,y),\qquad\qquad\qquad\qquad\quad\;\;\, (t,x,y)\in [0,T]\times\partial_*(\Omega\times{\mathcal O}),
\\[1mm]
D_{\nu}u(t,x,y)=D_{\nu}h(t,x,y),\qquad\qquad\qquad\quad\;\! (t,x,y)\in [0,T]\times\Gamma\times{\mathcal O}.
\end{array}
\right.
\label{9.1}
\end{equation}
where $\Omega$ and ${\mathcal O}$ $a$ and $b$ are as in the previous sections (see Hypothesis \ref{hyp-1}), whereas function $q$ satisfies the following condition
\begin{hyp}
$q:\C^{n+1}\to\C$ is a Lipschitz-continuous function with a Lipschitz constant $\kappa$.
\end{hyp}

\begin{theorem} Let $u_j\in H^1((0,T);L^2_{\C}(\Om\times {\mathcal O}))\cap L^2((0,T);{\mathcal H}^2_{\C}(\Om\times {\mathcal O}))$ be a solution to problem $\eqref{9.1}$ corresponding to $(g,h)=(g_j,h_j)$, $j=1,2$. Then, for any $\varepsilon\in (0,1/2)$, there exists a positive constant $C=C(\varepsilon,T)$ such that
\begin{align}
&\|u_2(\tau,\cdot,\cdot)-u_1(\tau,\cdot,\cdot)\|^2_{L^2_{\C}(\Om\times {\mathcal O})}
+ a_0\|\nabla_xu_2-\nabla_xu_1\|^2_{L^2_{\C}(Q(2\ve T,\tau))}\no\\
\le &C\Big\{\|g_2-g_1\|^2_{L^2_{\C}(Q_T)}
+\|h_2-h_1\|^2_{H^1((0,T);L^2_{\C}(\Om\times {\mathcal O}))}
+\|{\rm div}_x(a\nabla_xh_2-a\nabla_xh_1)\|_{L^2_{\C}(Q_T)}^2\no\\
&\quad\;\,+\|b\cdot\nabla_yh_2-b\cdot\nabla_yh_1\|^2_{L^2_{\C}(Q_T)} + \|\nabla_xh_2-\nabla_xh_1\|^2_{L^2_{\C}(Q_T)}\Big\},
\label{9.11}
\end{align}
for any $\tau\in [2\varepsilon T,T]$.
\end{theorem}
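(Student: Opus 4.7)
The plan is to reduce problem (\ref{9.1}) to the linear framework of Section \ref{sect-4}. I would set
\begin{eqnarray*}
w:=u_2-u_1,\qquad \widetilde g:=g_2-g_1,\qquad \widetilde h:=h_2-h_1,
\end{eqnarray*}
and $Q:=q(u_2,\nabla_xu_2)-q(u_1,\nabla_xu_1)$, so that $w$ satisfies the boundary value problem obtained from (\ref{5.1}) by taking $c\equiv 0$ and $b_0\equiv 0$, with source $\widetilde g+Q$, boundary datum $\widetilde h$ on $[0,T]\times\partial_*(\Om\times{\mathcal O})$ and $D_\nu\widetilde h$ on $[0,T]\times\Gamma\times{\mathcal O}$. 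The Lipschitz continuity of $q$ immediately yields the pointwise bound $|Q|^2\le 2\kappa^2(|w|^2+|\nabla_xw|^2)$.

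The key structural observation driving the argument is that, in both the deduction of Theorem \ref{thm-main1} from Theorem \ref{thm-main2} and in the energy argument of Section \ref{sect-4}, the lower-order terms $c\cdot\nabla_xv$ and $b_0v$ enter exclusively through their pointwise majorizations $\|c\|_\infty|\nabla_xv|$ and $\|b_0\|_\infty|v|$; they are then absorbed, respectively, into the positive terms of the Carleman estimate (by enlarging $\l$) and into the $L^2$-gradient term in (\ref{cont-dep}) (by Young's inequality with a constant tied to $a_0$). Since $Q$ satisfies exactly the same type of pointwise bound with $\kappa$ in place of $\|c\|_\infty$ and $\|b_0\|_\infty$, the very same absorption is available for $Q$. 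After the translation $\widetilde w:=w-\widetilde h$, which restores homogeneous boundary data, the proof of Theorem \ref{thm-main1} thus carries over verbatim and produces a Carleman estimate of the form (\ref{6.72}) for $\widetilde w$, with the modified threshold $\widetilde\l_0$ now depending on $\kappa$ in place of $\|c\|_\infty,\|b_0\|_\infty$.

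Having this Carleman estimate at hand, I would reproduce the continuous dependence argument of Section \ref{sect-4} for $\widetilde w$. Setting $\widetilde w_\ve:=\s_\ve\widetilde w$ as in (\ref{chi}), and using $\nabla_x\widetilde w_\ve=\s_\ve\nabla_x\widetilde w$ together with $\s_\ve\in[0,1]$, one obtains
\begin{eqnarray*}
|\s_\ve Q|\le\kappa(|\widetilde w_\ve|+|\nabla_x\widetilde w_\ve|)+\kappa(|\widetilde h|+|\nabla_x\widetilde h|).
\end{eqnarray*}
Hence, testing the PDE for $\widetilde w_\ve$ against $\overline{\widetilde w_\ve}$ and integrating over $\Om\times{\mathcal O}$, the first addendum on the right is absorbed via Young's inequality in exactly the same manner as $c\cdot\nabla_xv_\ve+b_0v_\ve$ was absorbed in (\ref{cont-dep}), while the second addendum merely contributes to the data norms on the right-hand side. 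The Gronwall-type lemma of Section \ref{sect-4} (applied with $p=1/2$) then yields the analogue of (\ref{7.11}) for $\widetilde w$, and undoing the translation $w=\widetilde w+\widetilde h$ produces (\ref{9.11}) with a constant $C=C(\ve,T,a_0,\|a\|_{2,\infty},\|{\rm div}\,b\|_\infty,\|\psi\|_{3,\infty},\alpha,\kappa)$.

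The main (essentially the only) difficulty is the careful bookkeeping of the $\kappa$-dependence of the various constants, most notably in the enlargement of the threshold $\widetilde\l_0$ of Theorem \ref{thm-main1} and of $M_3(\ve,T)$ in (\ref{7.14}); this is however unambiguous once one inspects the explicit roles played by $\|c\|_\infty$ and $\|b_0\|_\infty$ in the proofs of Theorems \ref{thm-main1} and \ref{thm-4.9}.
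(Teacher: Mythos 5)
Your proposal is correct and follows essentially the same route as the paper: translate to homogeneous boundary data, bound the nonlinear difference $Q$ pointwise via the Lipschitz constant $\kappa$, absorb its $\widetilde w$-part into the left-hand side of the Carleman estimate by enlarging $\lambda$ (so the threshold depends on $\kappa$) and into the energy identity via Young's inequality, send the $\widetilde h$-part to the data, and conclude with the Gronwall argument of Section \ref{sect-4}. The only small imprecision is that the Carleman estimate you obtain for $\widetilde w$ is not literally of the form \eqref{6.72}: it necessarily carries an additional data term of the type $\kappa^2\int_{Q_T}\big(|\widetilde h|^2+|\nabla_x\widetilde h|^2\big){\rm e}^{2\lambda\varphi_{\rho_0}}\,\rmd t\rmd x\rmd y$ on the right, exactly as in the paper's displayed estimate, which is harmless since these quantities already appear among the data norms in \eqref{9.11}.
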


\begin{proof}
The proof follows adapting the arguments in Sections \ref{sect-3} and \ref{sect-4}. Hence, we just point out the differences.

Note that, if we set
\begin{align*}
{\mathcal P}(u)=D_tu-{\rm div}_x(a\nabla_xu)-b\cdot\nabla_yu-q(u,\nabla_xu)=:{\mathcal P}_0(u)-q(u,\nabla_xu),
\end{align*}
we can rewrite the differential equation in \eqref{9.1} in the much more compact form:
${\mathcal P}u=g$.

First we perform the translations $v_j=u_j-h_j$, $j=1,2$, and observe that the function $v=v_2-v_1$ solves problem \eqref{9.1} with $q(v,\nabla_xv)$ and $(g,h)$ being replaced,
respectively, by $Q(v_1,v_2)$ and $(g_2-g_1,0)$, where
\begin{align}
\label{9.3}
Q(v_1,v_2)=q(v_2+h_2,\nabla_xv_2+\nabla_x h_2)-q(v_1+h_1,\nabla_xv_1+\nabla_x h_1)
\end{align}
and
\begin{equation}
\label{9.2}
{\widetilde g}_j=g-D_th_j+{\rm div}_x(a\nabla_xh_j)+b\cdot\nabla_yh_j,\qquad\;\, j=1,2.
\end{equation}
Moreover,
\begin{equation}
{\mathcal P}(v)={\mathcal P}_0(v)-Q(v_1,v_2).
\label{vv}
\end{equation}
Since $q$ is a Lipschitz continuous function in $\C^{n+1}$, we can estimate (pointwise)
\begin{equation}
|Q(v_1,v_2)|\le  h_q\big (|v_2-v_1|+|\nabla_x(v_2-v_1)|\big )
+h_q\big (|h_2-h_1|+|\nabla_x(h_2-h_1)|\big ).
\label{vvv}
\end{equation}
From the Carleman estimate in Theorem \ref{thm-main2}, \eqref{vv} and \eqref{vvv}, we can infer that $v$ satisfies the following integral inequality for all $\l\ge \l_0$:
\begin{align*}
\phantom{\le\ }
&\int_{Q_T} \big (\l|\nabla_x\varphi_{\rho_0}||\nabla_xv|^2 + \l^3|\nabla_x\varphi_{\rho_0}|^3|v|^2\big)
{\rm e}^{2\l \varphi_{\rho_0}}\,\rmd t\rmd x\rmd y\\
\le &\frac{32}{3}\int_{Q_T} |{\mathcal P}_0v|^2
{\rm e}^{2\l \varphi_{\rho_0}}\,\rmd t\rmd x\rmd y
\nonumber\\[1mm]
\le &\frac{160}{3}\int_{Q_T} |{\mathcal P}(v_2)-{\mathcal P}(v_1)|^2{\rm e}^{2\l \varphi_{\rho_0}}\,\rmd t\rmd x\rmd y
\nonumber\\[1mm]
\phantom{\le}
& + \frac{160}{3}\kappa^2\int_{Q_T} \big[|v_2-v_1|^2+|\nabla_x(v_2-v_1)|^2\big]
{\rm e}^{2\l \varphi_{\rho_0}} \,\rmd t\rmd x\rmd y
\nonumber\\[1mm]
\phantom{\le}
&+ \frac{160}{3}\kappa^2\int_{Q_T} \big[|(h_2-h_1)|^2+|\nabla_x(h_2-h_1)|^2\big]
{\rm e}^{2\l \varphi_{\rho_0}} \,\rmd t\rmd x\rmd y
\nonumber\\[1mm]
\le &\frac{160}{3}\int_{Q_T} |{\mathcal P}(v_2)-{\mathcal P}(v_1)|^2{\rm e}^{2\l \varphi_{\rho_0}} \,\rmd t\rmd x\rmd y
\nonumber\\[1mm]
\phantom{\le}
&+ \frac{160}{3}\kappa^2\int_{Q_T} \bigg\{\bigg[ \bigg (\frac{\ell}{\alpha\rho_0}\bigg )^3|\nabla_x\varphi_{\rho_0}|^3|(v_2-v_1)(t,x,y)|^2
\nonumber\\[1mm]
\phantom{\le}
&\phantom{\le\frac{160}{3}\kappa^2\int_{Q_T} \bigg\{\bigg[\;\;}+\frac{\ell}{\alpha\rho_0}|\nabla_x\varphi_{\rho_0}||\nabla_x(v_2-v_1)|^2\bigg ]{\rm e}^{2\l \varphi_{\rho_0}}
\bigg\} \,\rmd t\rmd x\rmd y
\nonumber\\[1mm]
\phantom{\le}
&+ \frac{160}{3}\kappa^2\int_{Q_T} \big[|(h_2-h_1)|^2+|\nabla_x(h_2-h_1)|^2\big]
{\rm e}^{2\l \varphi_{\rho_0}} \,\rmd t\rmd x\rmd y.
\end{align*}
Whence we deduce the Carleman estimate for $v$:
\begin{align*}
& \phantom{\le\le}
\int_{Q_T} \big (\l|\nabla_x\varphi_{\rho_0}||\nabla_x(v_2-v_1)|^2 + \l^3|\nabla_x\varphi_{\rho_0}|^3|v_2-v_1|^2\big)
{\rm e}^{2\l \varphi_{\rho_0}}\,\rmd t\rmd x\rmd y
\nonumber\\[1mm]
&\le \frac{320}{3}\int_{Q_T} |{\mathcal P}(v_2)-{\mathcal P}(v_1)|^2{\rm e}^{2\l \varphi_{\rho_0}}
 \,\rmd t\rmd x\rmd y,
\nonumber\\[1mm]
&\phantom{\le\;\;}
+ \frac{320}{3}\kappa^2\int_{Q_T} \big[|h_2-h_1|^2+|\nabla_x(h_2-h_1)|^2\big]
{\rm e}^{2\l \varphi_{\rho_0}} \,\rmd t\rmd x\rmd y,
\end{align*}
if we choose
\begin{align*}
\l\ge\max\left\{\l_0,\bigg (\frac{320}{3}\kappa^2\bigg )^{1/3}\frac{\ell}{\alpha\rho_0},\frac{320}{3}\kappa^2\frac{\ell}{\alpha\rho_0}\right\}.
\end{align*}

Now, we are almost done. Indeed, arguing as in the proof of \eqref{cont-dep} we can show that
\begin{align*}
&D_t\|v_\ve(t,\cdot,\cdot)\|^2_{L^2_{\C}({\Om\times {\mathcal O}})}
+a_0\|\nabla_xv_\ve(t,\cdot,\cdot)\|_{L^2_{\C}(\Omega\times{\mathcal O})}^2
\no\\
\le &\|{\rm div}\,b\|_{\infty}\|v_\ve(t,\cdot,\cdot)\|_{L^2_{\C}(\Omega\times{\mathcal O})}^2+a_0^{-1}\int_{\Omega\times{\mathcal O}}\sigma_{\varepsilon}(t)|Q(v_1,v_2)||v_{\varepsilon}|\rmd x\rmd y\no\\
&+ 2\|{\widetilde g}_{2,\ve}(t,\cdot,\cdot)-{\widetilde g}_{1,\ve}(t,\cdot,\cdot)\|_{L^2_{\C}(\Omega\times{\mathcal O})}\|v_{\ve}(t,\cdot,\cdot)\|_{L^2_{\C}(\Omega\times\mathcal{O})}\nonumber\\
&+ 2|\s'_\ve(t)|\|v(t,\cdot,\cdot)\|_{L^2_{\C}(\Omega\times {\mathcal O})}^2,
\end{align*}
for any $t\in (0,T)$,
where $g_{j,\ve}=\sigma_{\varepsilon}g_j$ ($j=1,2$) and $\sigma_{\varepsilon}$ is given by
\eqref{chi}.
Since $q$ is Lipschitz continuous, we can estimate
\begin{align}
\label{9.10}
|v_{\varepsilon}||\sigma_{\varepsilon}Q(v_1,v_2)|
&\le \kappa|v_{\varepsilon}|^2+\kappa|v_{\varepsilon}||\nabla_xv_{\varepsilon}|
+\kappa|v_{\varepsilon}||h_2-h_1|+\kappa|v_{\varepsilon}|\nabla_xh_2-\nabla_xh_1|\no\\
&\le 2\kappa|v_{\varepsilon}|^2+\kappa|\nabla_xv_{\varepsilon}|^2
+\kappa|h_2-h_1|^2+\kappa|\nabla_xh_2-\nabla_xh_1|^2
\end{align}
and, consequently,
\begin{align*}
&D_t\|v_\ve(t,\cdot,\cdot)\|^2_{L^2_{\C}({\Om\times {\mathcal O}})}
+a_0\|\nabla_xv_\ve(t,\cdot,\cdot)\|_{L^2_{\C}(\Omega\times{\mathcal O})}^2
\no\\
\le &\|{\rm div}\,b\|_{\infty}\|v_\ve(t,\cdot,\cdot)\|_{L^2_{\C}(\Omega\times{\mathcal O})}^2+2a_0^{-1}\kappa\|v_{\varepsilon}(t,\cdot,\cdot)\|^2_{L^2_{\C}(\Omega\times{\mathcal O})}
\nonumber\\
&+a_0^{-1}\kappa\|\nabla_xv(t,\cdot,\cdot)\|_{L^2_{\C}(\Omega\times{\mathcal O})}+\frac{1}{2}a_0^{-1}\kappa\|h_2(t,\cdot,\cdot)-h_1(t,\cdot,\cdot)\|_{L^2_{\C}(\Omega\times{\mathcal O})}
\no\\
&+\frac{1}{2}a_0^{-1}\kappa\|\nabla_xh_2(t,\cdot,\cdot)-\nabla_xh_1(t,\cdot,\cdot)\|_{L^2_{\C}(\Omega\times{\mathcal O})}\nonumber\\
&+ 2\|{\widetilde g}_{2,\ve}(t,\cdot,\cdot)-{\widetilde g}_{1,\ve}(t,\cdot,\cdot)\|_{L^2_{\C}(\Omega\times{\mathcal O})}\|v_{\ve}(t,\cdot,\cdot)\|_{L^2_{\C}(\Omega\times\mathcal{O})}\nonumber\\
&+ 2|\s'_\ve(t)|\|v(t,\cdot,\cdot)\|_{L^2_{\C}(\Omega\times {\mathcal O})}^2.\nonumber\\
&
\end{align*}

From \eqref{9.10}, reasoning as in the previous section, we easily deduce the desired continuity estimates \eqref{7.11} for $v$, where ${\widetilde g}={\widetilde g}_2-{\widetilde g}_1$.

Now, the proof follows the same lines as the proof of Theorem \ref{thm-4.9} and yields \eqref{9.11}.
\end{proof}

\end{document}